\documentclass{amsart}
\usepackage[utf8]{inputenc}

\author{Ahmed Laghribi}
\address{Univ. Artois, UR2462, Laboratoire de Mathématiques de Lens (LML), F-62300 Lens, France} 
\email{ahmed.laghrbi@univ-artois.fr}

\title[Cohomological invariants in characteristic 2]{A cohomological invariant for algebras of degree 8 and exponent 2 in characteristic 2}

\author{Nico Lorenz}
\address{{Fakult\"at f\"ur Mathematik}, {Ruhr-Universit\"at Bochum},{Universit\"atsstra{\ss}e 150}, {44780 Bochum}, {North Rhine-Westphalia}, {Germany}}
\email{nico.lorenz@ruhr-uni-bochum.de}

\date{\today}

\usepackage{amsthm}
\usepackage{amsmath}
\usepackage{amssymb}
\usepackage[shortlabels]{enumitem}
    \setlist[enumerate,1]{label=(\roman*), font=\normalfont}
    \setlist[enumerate,2]{label=(\alph*), font=\normalfont}
    \setlist{nosep}
\usepackage{tikz}
\usepackage{xcolor}
\usepackage{mydiagrams}
\usepackage{todonotes}

\definecolor{Sepia}{HTML}{671800}
\definecolor{MidnightBlue}{HTML}{006795}
\usepackage[colorlinks=true]{hyperref}
\hypersetup{citecolor=Sepia, linkcolor=Sepia, urlcolor=MidnightBlue}

\usepackage{cleveref}

\newtheorem{definition}{Definition}[section]

\newtheorem{theorem}[definition]{Theorem}
\newtheorem{lemma}[definition]{Lemma}
\newtheorem*{lemma*}{Lemma}
\newtheorem{corollary}[definition]{Corollary}

\newtheorem{remark}[definition]{Remark}
\newtheorem{example}[definition]{Example}
\newtheorem{proposition}[definition]{Proposition}

\numberwithin{equation}{section}

\newcommand{\Z}{\mathbb{Z}}
\newcommand{\N}{\mathbb{N}}

\newcommand{\Q}{\mathbb{Q}}

\DeclareMathOperator{\cha}{char }

\newcommand{\im}{\operatorname{im}}

\newcommand{\qf}[1]{\langle #1\rangle}

\newcommand{\bb}{\mathfrak{b}} 
\newcommand{\iw}[1]{\mathfrak{i}_{\mathrm{W}}(#1)} 
\newcommand{\an}{\mathrm{an}} 
\newcommand{\ql}[1]{\mathrm{ql}\left(#1\right)}

\newcommand{\qpf}[1]{\langle\!\langle #1]\!]}
\newcommand{\bif}[1]{\langle #1 \rangle_b}
\newcommand{\bipf}[1]{\langle\!\langle #1\rangle\!\rangle_b}
\makeatletter

\newcommand{\bigperp}{%
  \mathop{\mathpalette\bigp@rp\relax}%
  \displaylimits
}
\newcommand{\bigp@rp}[2]{%
  \vcenter{
    \m@th\hbox{\scalebox{\ifx#1\displaystyle2.1\else1.5\fi}{$#1\perp$}}
  }%
}
\makeatother

\newcommand{\dbrac}[1]{(\!(#1)\!)}
\newcommand{\dbrack}[1]{[\![#1]\!]}

\renewcommand{\H}{\mathbb{H}}

\newcommand{\BrTwo}{\mathrm{Br}_2}
\newcommand{\Br}{\mathrm{Br}}
\newcommand{\ind}{\mathrm{ind}}
\newcommand{\quat}[1]{[#1)}
\newcommand{\trace}{\mathrm{Tr}}
\newcommand{\norm}{\mathrm{N}}
\newcommand{\res}{\mathrm{res}}

\newcommand{\diff}{\mathrm{d}}
\newcommand{\dlog}{\mathrm{dlog}}

\newcommand{\I}{\mathsf{I}}
\newcommand{\W}{\mathsf{W}}
\newcommand{\GP}{\mathsf{GP}}

\newcommand{\D}{\mathsf{D}}
\newcommand{\inv}{\mathsf{inv}}
\newcommand{\torsChow}{\mathrm{Tors CH}^2}
\newcommand{\SevBrauer}{\mathrm{SB}}
\renewcommand{\phi}{\varphi}

\keywords{Quadratic forms, Cohomological Invariants, Characteristic 2, Kato-Milne cohomology, central simple algebra, descent, transfer}
\subjclass{11E04, 11E81, 16K20, 13N05}

\begin{document}
\begin{abstract}
    Our aim in this paper is to extend a work of Sivatski \cite{Sivatski_CohoInv} to characteristic 2.
	More precisely, for $F$ a field of characteristic $2$ and a central simple algebra $A$ of exponent 2 that splits over a triquadratic extension of $F$ of separability degree at least 4, we attach a cohomological invariant $\inv(A) \in H_2^3(F) / G$, where $H_2^3(F)$ is the third Kato-Milno cohomology group and $G$ is a subgroup of $H_2^3(F)$ divisible by the Brauer class of $A$.
	As an application, we will relate the decomposability of the algebra in degree 8 to the vanishing of $\inv(A)$.
	Moreover, we will use this invariant to prove some descent results for central simple algebras and quadratic forms over biquadratic extensions.
\end{abstract}

\maketitle

\section{Introduction} \label{Sec_Intro}

Throughout this article, $F$ denotes a field of characteristic 2. 
Given a central simple algebra $A$ over $F$, it is a difficult problem to know whether it is decomposable or not, i.e. whether there are nontrivial central simple algebras $A_1, A_2$ over $F$ such that $A \cong A_1 \otimes_F A_2$.
Recall that for $i\in \{1, 2\}$, every central simple algebra of degree $2^i$ and exponent $2$ is Brauer equivalent to a product of $i$ quaternion algebras.
The situation is more complicated for degree at least $8$ and exponent $2$.
In this case, Amitsur-Rowen-Tignol \cite[Theorem 5.1]{AmitsurRownTignol_DivisionAlgebras} in characteristic different from 2 and Rowen \cite[Section 3]{Rowen_DivisionAlgebraExp2Char2} in characteristic 2 provided examples of indecomposable algebras of degree 8 and exponent 2.
Later, Karpenko produced examples of indecomposable algebras of degree $p^n$ and exponent $p$ except for the case $p = 2 = n$, see \cite[Corollary 5.4]{Karpenko_Codim2Cycles}.

Another important topic of a somewhat different flavour is the descent problem: given a field extension $K / F$ and a central simple algebra $A$ over $K$, determine whether there is an $F$-algebra $A'$ such that $A \cong A'_K$.
Recently, in the spirit of what was done before by Barry in \cite{Barry_DecompAlgebrasDeg8Exp2}, Barry, Chapman and Laghribi \cite{BarryChapmanLaghribi_DescentBiquaternion} studied the descent problem over separable quadratic extensions $K/F$ for biquaternion algebras with trivial corestriction.
They introduced an invariant $\delta_{K/F}$ that vanishes for such a $K$-algebra $B$ if and only if $B$ is defined over $F$ up to isomorphism.
Moreover they applied this invariant to construct an indecomposable algebra of degree $8$ and exponent $2$ over a field of cohomological $2$-dimension 3, demonstrating a connection between decomposability and descent.

In the spirit of finding invariants that encode information on the algebra, Sivatski introduced in \cite{Sivatski_CohoInv}, for fields of characterstic not 2, an invariant for any central simple algebra of exponent $2$ split by a triquadratic extension.
In particular, when the degree is $8$, this invariant vanishes if and only if the algebra decomposes into a tensor product of three quaternion algebras related to the triquadratic extension.
It has been also applied to prove results about the descent for algebras and quadratic forms. 

Our aim in this paper is to extend most of \cite{Sivatski_CohoInv} to fields of characteristic 2.
We will not restrict ourselves to separable triquadratic extensions but will consider more generally the the case of extensions of the form $L = K(\wp^{-1}(a), \wp^{-1}(b))$, where $K/F$ is an arbitrary quadratic extension.
In the next section, we will recall some background on quadratic forms and differential forms in characteristic 2.
After that, in \Cref{sec_ConstructionInvariant} we construct the invariant 
\[\inv(A) \in H_2^3(F) / [A] \wedge \dlog(\im(\norm_{K(\wp^{-1}(a + b)) / F})\setminus \{0\}),\]
where $H_2^3(F)$ is the third Kato-Milne cohomology group, $[A]$ is the class of $A$ in $\BrTwo(F)$ and $\dlog(\alpha) = \frac{\diff \alpha}{\alpha} \in \Omega^1_F$ for any $\alpha \in F^\ast$ with $\Omega^1_F$ the space of 1-differential forms, see \Cref{Sec_Preliminaries} for details.

In \Cref{sec_InvTrivial}, we prove that this invariant $\inv(A)$ is trivial if and only if $[A] = [Q \otimes \quat{a+b, v} \otimes Q']$, where $Q, Q'$ are quaternion such that $Q'$ splits over $K$.
These conditions are further equivalent to $[A_{K(\wp^{-1}(a + b))}]$ being defined over $F$, see \Cref{prop_invTrivial}.
In \Cref{ex_NontrivialInv}, we construct a biquaternion $F$-algebra $B$ such that $\inv(B) \neq 0$ but $\delta_{K/F} (B) = 0$.
This examples combines involved arguments relying on the construction of Rowen of an indecomposable algebra of degree 8 and exponent 2, residue maps introduced in \cite[Proposition 6.3]{BarryChapmanLaghribi_DescentBiquaternion} and the nonexcellence of biquadratic separable extensions \cite[Example 6.1]{LaghribiMukhija_ExcellenceInsepQuartic}, which is also based on Rowen's example cited before.

In \Cref{sec_ChowGroups} we will combine the results about the invariant with deep results from E. Peyre and N. Karpenko to construct an indecomposable algebra of index $8$ such that there is a nontrivial element in the torsion part of the second Chow group of its Severi-Brauer variety. 
Moreover, we can describe this element explicitly.

\Cref{sec_Descent} will be devoted to applications of the invariant. 
More precisely for an odd degree extension $M / F$, mixed biquadratic extension $M' / F$ and a central simple $F$-algebra $D$ such that $D_{MM'}$ is Brauer equivalent to $Q_{MM'}$ for an $M$-quaternion $Q$, we show that $D_{M'}$ comes from an $F$-quaternion algebra, see \Cref{thm_DescentAlgebras}.

We apply this descent to get a similar result for quadratic forms.
For an $F$-quadratic form $\varphi$ such that $\varphi_{MM'}$ is Witt equivalent to $\psi_{MM'}$ for an $M$-quadratic form $\psi$ of dimension at most 3 or similar to a two-fold Pfister form, we show that $\varphi_{M'}$ is Witt equivalent to some $F$-quadratic form $\psi'$ of the same shape as $\psi$ over $M'$, see \Cref{thm_descentQF}.

%
%
%
%
%
%
%
%
%

\section{Preliminaries and Notation}\label{Sec_Preliminaries}

\subsection{Quadratic forms}

For quadratic forms, our standard reference will be \cite{ElmanKarpenkoMerkurjev2008}.
To keep our paper self contained, we recall some definitions and notations.
Over a field of characteristic 2, any quadratic form $\varphi$ decomposes as
\begin{align} \label{eq_QFDecomp}
	\varphi \cong [a_1, b_1] \perp \ldots \perp [a_r, b_r] \perp \qf{c_1, \ldots, c_s},
\end{align}
where $[a, b]$ denotes the two-dimensional quadratic form given by $aX^2 + XY + bY^2$ and $\qf{c_1, \ldots, c_s}$ denotes the $s$-dimensional quadratic form given by $c_1Z_1^2 + \ldots + c_sZ_s^2$.
The tuple $(r, s) \in \N \times \N$ is unique and called the \emph{type} of $\varphi$.
The form $\qf{c_1, \ldots, c_s}$ is unique up to isometry and called the \emph{quasilinear part} of $\varphi$.
We denote it by $\ql{\varphi}$.
The form $\varphi$ is called \emph{nonsingular} if $s = 0$ and \emph{totally singular} if $r = 0$.
In this case, the \emph{Arf-invariant} of $\varphi$ as in \eqref{eq_QFDecomp} is defined to be $\Delta(\varphi) := \sum_{i = 1}^r a_ib_i \in F/ \wp(F)$, where $\wp(F) = \{x^2 + x \mid x \in F\}$.

A quadratic form $\varphi$ is called \emph{isotropic} if there is a nonzero vector $v$ with $\varphi(v) = 0$; otherwise, it is called \emph{anisotropic}.
The quadratic form $\H = [0, 0]$ is called the \emph{hyperbolic plane}. 
Up to isometry it is the unique nonsingular isotropic form of dimension 2.
A form isometric to a sum of hyperbolic planes is called \emph{hyperbolic}.

The Witt decomposition asserts that any form $\varphi$ uniquely decomposes as
\[\varphi \cong \varphi_\an \perp \mathcal H \perp \qf{0, \ldots, 0}, \]
where $\varphi_\an$ is anisotropic and $\mathcal H$ is hyperbolic.
The integer $\frac12 \dim \mathcal H$ is called the \emph{Witt index} of $\varphi$ and denoted by $\iw\varphi$.
We call $\varphi_\an$ the \emph{anisotropic part} of $\varphi$.

Two forms $\varphi, \psi$ are \emph{Witt equivalent}, if there exist $n, m \in \N$ such that 
\[\varphi \perp n \times \H \cong \psi \perp m \times \H.\]
In this case, we write $\varphi \sim \psi$.

We denote the \emph{(quadratic) Witt group} of $F$ by $\W_q(F)$, and the \emph{(bilinear) Witt ring} of $F$ by $\W(F)$.
It is well known that we have an action of $\W(F)$ on $\W_q(F)$ induced by the tensor product, see \cite[Section 8.B]{ElmanKarpenkoMerkurjev2008}.
For any $n \geq 1$, let $\I^n(F)$ denote the $n$-th power of the fundamental ideal $\I(F)$ of $\W(F)$ (where $\I^0(F) = \W(F)$).
Similarly, denote by $\I^{n+1}_q(F)$ the submodule of $\W_q(F)$ given by $\I^n(F) \otimes \W_q(F)$ (note that $\I^1_q(F) = \W_q(F)$).
It is well known that $\I^n(F)$ is additively generated by the bilinear forms
\[\bif{1, a_1} \otimes \ldots \otimes \bif{1, a_n} =: \bipf{a_1, \ldots, a_n}\]
for some $a_1, \ldots, a_n \in F^\ast$ that we call an \emph{$n$-fold bilinear Pfister form}.
Similarly, $\I^{n+1}_q(F)$ is additively generated by scalar multiples of forms of the shape
\[\bipf{a_1, \ldots, a_n} \otimes [1, a] =: \qpf{a_1, \ldots, a_n, a}\]
which are called \emph{$(n+1)$-fold quadratic Pfister forms}.

Recall the Arason-Pfister Hauptsatz, which states that if $\varphi$ is a nonzero anisotropic quadratic Pfister form lying in $\I^n(F)$, then $\dim(\varphi) \geq 2^n$.

Given a field extension $K / F$ of finite degree and a nonzero $F$-linear map $s: K \to F$ and a (quadratic or bilinear) form $\varphi$ over $K$, we associate the form, denoted by $s_\ast(\varphi) := s \circ \varphi$, that we call the \emph{transfer} of $\varphi$ with respect to $s$.
This new form has dimension $[K : F] \cdot \dim(\varphi)$, where $\dim(\varphi)$ denotes the dimension of $\varphi$.
A well known fact that we will regularly use is the \emph{Frobenius reciprocity}, stating that for a bilinear form $\beta$ over $F$ resp. over $K$ and a quadratic form $\varphi$ over $K$ resp. over $F$, we have
\[s_\ast(\beta_K\otimes \varphi) = \beta \otimes s_\ast(\varphi) \quad \text{ resp. } \quad s_\ast(\beta\otimes \varphi_K) = s_\ast(\beta) \otimes \varphi.\]

\subsection{Central simple algebras}

Our standard reference for central simple algebras is \cite{GilleSzamuely_CSAGalCohomology}.
Let $\Br(F)$ denote the Brauer group of $F$.
For any central simple algebra $A$ over $F$, let $\deg(A) = \sqrt{\dim_F(A)}$ denote the \emph{degree} of $A$, and let $\ind(A)$ be the \emph{index} of $A$, i.e. the degree of the division algebra $D$ Brauer equivalent to $A$.
An algebra $A$ is called \emph{split} if it is Brauer equivalent to $F$.

An \emph{$F$-quaternion algebra}, denoted by $\quat{a, b}$, is an $F$-algebra of dimension $4$ with a basis $1, u, v, w$ subject to the relations 
\[u^2 + u = a\in F, \quad v^2 = b \in F^\ast, \quad uv = w = vu + v.\]
For any quaternion algebra $Q = \quat{a, b}$, we can associate its \emph{norm form} given by $\qpf{b, a}$.
Recall the following relations in $\BrTwo(F)$ that will be used frequently:
\[\quat{a, b} + \quat{a, c} = \quat{a, bc} \quad \text{ and }\quad \quat{x, y} + \quat{z, y} = \quat{x + z, y},\]
where $a, x, z \in F$ and $b, c, y \in F^\ast$.

Note that $Q$ contains an inseparable and a separable quadratic field extensions of $F$, namely $F(v)$ and $F(u)$ respectively and $Q$ splits over these two extensions.
Conversely, if a central simple algebra $A$ of exponent 2 splits over a quadratic extension $F(x)$ given by $x^2 = d \in F$ (resp. $x^2 + x = d \in F$) then $A$ is Brauer equivalent to $\quat{c, d}$ (resp. $Q \cong \quat{d, c}$) for a suitable $c \in F$ (resp. $c \in F^\ast$), i.e. we have
\[\BrTwo(F(x) / F) := \ker(\BrTwo(F) \to \BrTwo(F(x))) = \begin{cases}
	\{[\quat{c, d}] \mid c \in F\}, &\text{ if $x^2 = d$}\\
	\{[\quat{d, c}] \mid c \in F^\ast\}, &\text{ if $x^2 + x = d$}.
\end{cases}
\]

Every central simple algebra of degree 4 and exponent 2 is a biquaternion algebra, i.e. it is isomorphic to the tensor product of two quaternion algebras (see \cite[p. 369]{Albert_Biquaternion} or \cite[(16.1) Theorem]{KnusMerkujevRostTignol_Involutions} for a modern treatment).
    An \emph{Albert form} is a form of dimension $6$ with trivial Arf invariant. 
    It is thus of the shape $x_1[1, a] \perp x_2[1, b] \perp x_3[1, a+b]$ for some $x_1, x_2, x_3, a, b \in F$.
    
    Given a biquaternion algebra $A = \quat{a_1, b_1} \otimes \quat{a_2, b_2}$, we can attach an Albert form $\alpha = [1, a_1 + a_2] \perp b_1[1, a_1] \perp b_2[1, a_2]$.
    This form is unique up to similarity, see \cite{MammoneShapiro_AlbertForm}.
    The isotropy behaviour of the $\alpha$ encodes the index of $A$ in that $\ind(A) = 4$ if and only if $\iw\alpha = 0$, $\ind(A) = 2$ if and only if $\iw\alpha = 1$ and $\ind(A) = 1$ if and only if $\iw\alpha = 3$, see \cite[(16.5) Theorem]{KnusMerkujevRostTignol_Involutions}.
   
Let $C(\varphi)$ denote the Clifford algebra of the quadratic form $\varphi$.
If $\varphi$ is nonsingular, then $C(\varphi)$ is a central simple algebra.
Explicitly, for $\varphi \cong b_1[1, a_1] \perp \ldots \perp b_r[1, a_r]$ with $b_1, \ldots, b_r \neq 0$, we have
\[C(\varphi) = \bigotimes_{i=1}^r \quat{a_i, b_i}\]
by \cite[p. 342]{ScharlauQfHFBook}.
The \emph{Clifford invariant} $c(\varphi)$ of $\varphi$ is defined as the class of $C(\varphi)$ in $\BrTwo(F)$, where $\BrTwo(F)$ denotes the 2-torsion part of the Brauer group of $F$.

\subsection{Differential forms}

Let $\Omega^1_F$ denote the $F$ vector space of 1-dimensional differentials generated by symbols $\diff a$, for $a \in F$, subject to the relations
\[\diff(a + b) = \diff(a) + \diff(b) \quad \text{ and } \quad \diff(ab) = a\diff b + b\diff a.\]
For an integer $n \geq 1$, let $\Omega_F^n = \bigwedge_{i = 1}^n \Omega_F^1$.
We take $\Omega_F^0 = F$ and for $n < 0$, we take $\Omega_F^n = 0$.
The space $\Omega_F^n$ is generated as an $F$ vector space by symbols of the shape $a \diff b_1 \wedge \ldots \wedge b_n$.

Let $\diff: \Omega_F^{n} \to \Omega_F^{n+1}$ be the \emph{differential operator} defined by 
\[a \diff b_1 \wedge \ldots \wedge b_n \mapsto \diff a \wedge \diff b_1 \wedge \ldots \wedge b_n.\]

The \emph{Artin-Schreier operator} $\wp: \Omega_F^n \to \Omega_F^{n} / \diff \Omega_F^{n - 1}$ is defined on generators by 
\[a \frac{\diff b_1}{b_1} \wedge \ldots \wedge \frac{\diff b_n}{b_n} \mapsto (a^2 - a)\frac{\diff b_1}{b_1} \wedge \ldots \wedge \frac{\diff b_n}{b_n}.\]

Its cokernel is denoted by $H_2^{n+1}(F)$.
Explicitly we have
\[H_2^{n+1}(F) = \Omega_F^n / (\diff \Omega_F^{n-1} + \wp(\Omega_F^n)).\]

It is easy to see that $H_2^1(F) \cong F / \wp(F)$ and a well known result by Sah \cite[Theorem 2 (ii)]{Sah_SBFandQF} asserts that $H_2^2(F) \cong \BrTwo(F) \cong \I_q^2(F) / \I_q^3(F)$.
More generally, for each $n \geq 1$, we have a celebrated result by Kato \cite{Kat1} stating that we have isomorphisms
\[f^n: \I_q^n(F) / \I_q^{n+1}(F) \to H_2^n(F) \]
mapping $\overline{\qpf{b_1, \ldots, b_{n-1}, a}}$ to $\overline{a \frac{\diff b_1}{b_1} \wedge \ldots \wedge \frac{\diff b_n}{b_n}}$.
For any $\varphi \in \I^n(F)$, let $e^n(\varphi) = f^n(\overline{\varphi})$.
To slim down the notation, for a central simple algebra $A$ over $F$ with $[A] \in \BrTwo(F)$, we also use the notation $[A]$ for the corresponding element in $H_2^2(F)$.

One of the results we will repeatedly use concerns the kernel $H_2^{n+1}(K/F) := \ker(H_2^{n+1}(F) \to H_2^{n+1}(K))$ of the restriction map, where $K / F$ is a biquadratic extension.
More precisely, if $K_1, K_2$ are quadratic extensions of $F$ with $K_1K_2 = K$, this kernel is equal to
\[H_2^{n+1}(K_1 / F) + H_2^{n+1}(K_2 / F),\]
see \cite[Theorem 20]{AravireJacob_GradedKernelBiquadraticSeparable} for the separable case and \cite[Proposition 2 and 3]{AravireLaghribiMultiQuadratic} for all other cases.

For $n = 1$, since $\BrTwo(F) \cong H_2^2(F)$, we obtain
\begin{align} \label{BrauerKernelBiquadratic}
	\BrTwo(K/F) = \BrTwo(K_1/F) + \BrTwo(K_2/F).
\end{align}

\section{Construction of the invariant} \label{sec_ConstructionInvariant}

Throughout this section, let $K / F$ be a quadratic field extension.
Then $K = F(\delta)$, where either $\delta^2 + \delta = d \in F^\ast$ or $\delta^2 = d \in F^\ast$, i.e. $\delta = \wp^{-1}(d)$ or $\delta = \sqrt d$, depending on whether $K / F$ is separable or inseparable respectively.
We fix the notation for $d$ and $\delta$ throughout the text.
Let $s: K \to F$ be the $F$-linear map with $s(1) = 0$ and $s(\delta) = 1$.
In the case of a separable extension, $s$ thus coincides with the usual trace map.
We denote by $s_\ast$ the induced Scharlau transfer on the level of Witt groups $\W_q(K) \to \W_q(F)$ and on the level of Kato-Milne cohomology $H_2^{m+1}(K) \to H_2^{m+1}(F)$ (and thus also on the level of Brauer classes in the 2-torsion part).
Let $\norm_{K/F}$ denote the norm map corresponding to the extension $K / F$, which is given by:
\[\norm_{K/F}(x) = \begin{cases}
x\overline x & \text{ if $K/F$ is separable}\\
x^2 & \text{ if $K/F$ is inseparable,}
\end{cases}\]
where $\overline x$ denotes the conjugate of $x$ in the separable case.

We will make extensive use of the following well known result:
\begin{lemma} \label{lem_basicComputationsTransfer}
	Let $K / F$, $s: K \to F$ and $\norm_{K/F}$ be as in the preceding paragraph.
	
	We have the following:
	\begin{enumerate}
		\item \label{lem_basicComputationsTransfer1} for $x\in K$, we have $s_\ast(\bif x) = s(x)\bipf{\norm_{K/F}(x)}$.
		\item \label{lem_basicComputationsTransfer2} for $a \in F$ and $x \in K^\ast$, we have $s_\ast(\quat{a, x}) = \quat{a, \norm_{K/F}(x)}$.
	\end{enumerate}
\end{lemma}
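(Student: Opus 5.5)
For part (i) I will compute the transfer directly from its definition. The form $\bif x$ over $K$ is the bilinear form $(u,v)\mapsto xuv$ on $K$, viewed as a one-dimensional $K$-space. Its transfer $s_\ast(\bif x)$ is the two-dimensional $F$-bilinear form $(u,v)\mapsto s(xuv)$ on $K$. I will compute its Gram matrix in the basis $1,\delta$, treating the two cases for $\delta$ separately.

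\emph{Separable case.} We have $\delta^2=\delta+d$. Writing $x=\alpha+\beta\delta$ with $\alpha,\beta\in F$ gives $s(x)=\beta$, $s(x\delta)=\alpha+\beta$ and $s(x\delta^2)=\alpha+\beta+\beta d$.

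\emph{Inseparable case.} We have $\delta^2=d$, so $s(x)=\beta$, $s(x\delta)=\alpha$ and $s(x\delta^2)=\beta d$.

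If $s(x)=\beta\neq0$, the vector $1$ is anisotropic with value $\beta$. I then diagonalize: a symmetric bilinear form with an anisotropic vector of value $\beta$ and determinant $D$ is isometric to $\bif{\beta,\beta D}$.

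The determinant works out as follows. In the separable case it is $\beta(\alpha+\beta+\beta d)+(\alpha+\beta)^2=\alpha^2+\alpha\beta+\beta^2 d$, and in the inseparable case it is $\beta^2 d+\alpha^2$. In both cases this is exactly $\norm_{K/F}(x)$, computed via the minimal polynomial of $\delta$, with signs irrelevant in characteristic $2$. Hence $s_\ast(\bif x)\cong\bif{\beta,\beta\norm(x)}=\beta\bipf{\norm_{K/F}(x)}$.

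If $\beta=0$, then $x=\alpha\in F^\ast$ and $s_\ast(\bif{\alpha})=\alpha\, s_\ast(\bif1)$. The form $s_\ast(\bif 1)$ has Gram matrix $\begin{pmatrix}0&1\\1&\ast\end{pmatrix}$, which is metabolic and hence zero in $\W(F)$. This agrees with the formula, read in the Witt ring, where $0\cdot(\ldots)=0$. This degenerate case is the only delicate point, and I will note that the statement is to be read in $\W(F)$ there.

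For part (ii) I will use the identification $\BrTwo\cong \I_q^2/\I_q^3$ together with Frobenius reciprocity. In $\W_q$, the class $\quat{a,x}$ over $K$ corresponds to the norm form $\qpf{x,a}=\bipf{x}\otimes[1,a]_K$. Since $s_\ast$ on Brauer classes is induced from the Witt-group transfer, reciprocity gives $s_\ast(\bipf x\otimes[1,a]_K)=s_\ast(\bipf x)\otimes[1,a]$.

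Next, $s_\ast(\bipf x)=s_\ast(\bif1)+s_\ast(\bif x)$, and the first summand vanishes by the computation above. If $s(x)\neq0$, part (i) then gives $s_\ast(\bif x)=s(x)\bipf{\norm(x)}$, so the transfer equals $s(x)\qpf{\norm(x),a}$. This is similar to the norm form of $\quat{a,\norm(x)}$, and similar forms have the same image in $\I_q^2/\I_q^3$ modulo $\I_q^3$, because $\bif c\varphi-\varphi=\bipf{c}\varphi\in \I_q^3$.

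If $s(x)=0$, then $x\in F$ and $s_\ast(\bipf x)=\bipf{x}\otimes s_\ast(\bif1)=0$. This matches $\quat{a,x^2}$, which is split. The consistency of the Brauer-level transfer with the Witt-level transfer under Sah's isomorphism is the step I take as given by the definition of $s_\ast$ on $H_2^2$.
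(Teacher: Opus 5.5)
Your proposal is correct.

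\textbf{Part (i).} Your Gram-matrix computation in the basis $1,\delta$ is the ``elementary'' verification that the paper leaves to the reader. The determinants $\alpha^2+\alpha\beta+\beta^2 d$ and $\alpha^2+\beta^2 d$ are indeed $\norm_{K/F}(x)$ in the two cases. Your treatment of $s(x)=0$, where the transfer is metabolic and hence zero in $\W(F)$, covers a degenerate case the paper does not mention.

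\textbf{Part (ii).} Here your route differs from the paper's. The paper cites \cite[2.5 Lemma]{ArasonAravireBaeza_Invariants} and \cite[Proposition 3.2]{Bingol_SymbolLength}, which compute the transfer of the symbol $a\,\dlog x$ directly in Kato--Milne cohomology. You instead work in $\W_q$: you reduce (ii) to (i) via Frobenius reciprocity and then pass to $\BrTwo$ through $e^2$.

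Your route buys the sharper Witt-level identity $s_\ast(\qpf{x,a})=s(x)\qpf{\norm_{K/F}(x),a}$ in $\W_q(F)$. This is exactly the computation the paper performs later for the Albert form in \eqref{eq_TrVarphiInI3}.

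Its cost is the one input you flag: the transfer on $\BrTwo$ must agree with the Scharlau transfer modulo $\I_q^3$ under Kato--Sah. This is the transfer appearing in the exact sequence \eqref{eq_ComplexExtTransfer}.
\begin{itemize}
\item That agreement is a tautology only if $s_\ast$ on $\BrTwo$ is defined through Witt groups.
\item The paper treats the analogous compatibility as a theorem, not a definition. For $e^3$ it invokes \cite[Proposition 3.3]{BarryChapmanLaghribi_DescentBiquaternion} in the separable case and \cite[Proposition 5.1 (ii)]{AravireLaghribiORyan_TransferInsep} in the inseparable case.
\item So you should cite that result rather than call it a definition. It is arguably no lighter an input than (ii) itself.
\end{itemize}

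The paper's citation route, by contrast, gives (ii) directly for the cohomological transfer that enters \eqref{eq_ComplexExtTransfer}.
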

\begin{proof}
	\ref{lem_basicComputationsTransfer1} is elementary; for \ref{lem_basicComputationsTransfer2} see \cite[2.5 Lemma]{ArasonAravireBaeza_Invariants} resp. \cite[Proposition 3.2]{Bingol_SymbolLength}.
\end{proof}

Let now further $a, b \in F^\ast$ such that for $L = K(\wp^{-1}(a), \wp^{-1}(b))$ we have $[L:F] = 8$.
We consider a central simple algebra $D$ with $[D] \in \BrTwo(L/F)$.
We further put 
\[E = K(\wp^{-1}(a + b))\] and
\[N(d; a, b) := \overline{\dlog(\im(\norm_{E/F})\setminus\{0\}),}\]
where $\dlog(\alpha) = \frac{\diff \alpha}{\alpha}$ for $\alpha \in F^\ast$.

To keep a picture of all the fields in place we include the following diagram of field extensions:
\begin{align*}
	\begin{diagram}
		& & L = K(\wp^{-1}(a), \wp^{-1}(b)) & & \\
		& \ruLine & \dLine & \rdLine & \\
		K(\wp^{-1}(a)) & & E = K(\wp^{-1}(a+b)) & & K(\wp^{-1}(b))\\
		& \rdLine & \dLine & \ruLine & \\
		& & K & & \\
		& & \dLine & & \\
		& & F & & 
	\end{diagram}
\end{align*}

For the case when $K / F$ is inseparable, we need the following analogue of \cite[Theorem 34.9]{ElmanKarpenkoMerkurjev2008}.

\begin{lemma}\label{lem_TransferExactSequenceInsep}
    Let $K / F$ be an inseparable quadratic extension, $K = F(\sqrt d)$, $s: K \to F$ be the linear map with $s(1) = 0, s(\sqrt d) = 1$ and $s_\ast:\W_q(K) \to \W_q(F)$ be the induced Scharlau transfer. 
    Then 
    \[\begin{diagram}
        \W_q(F) & \rTo^{r_\ast} & \W_q(K) & \rTo^{s_\ast} & \W_q(F)
    \end{diagram}\] 
    is exact.
\end{lemma}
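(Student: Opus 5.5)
The plan is to prove the two inclusions $\im(r_\ast) \subseteq \ker(s_\ast)$ and $\ker(s_\ast) \subseteq \im(r_\ast)$ separately, modelled on the proof of \cite[Theorem 34.9]{ElmanKarpenkoMerkurjev2008}.

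\emph{The composite $s_\ast \circ r_\ast$ is zero.} Take a nonsingular form $\varphi$ over $F$. By Frobenius reciprocity $s_\ast(\varphi_K) = s_\ast(\bif 1) \otimes \varphi$. Here $s_\ast(\bif 1)$ is the $F$-bilinear form $(x,y) \mapsto s(xy)$ on $K$. In the basis $1, \sqrt d$ its Gram matrix is $\begin{pmatrix} 0 & 1 \\ 1 & 0\end{pmatrix}$, because $s(1) = 0$, $s(\sqrt d) = 1$ and $s(d) = 0$. So $s_\ast(\bif 1)$ is a metabolic bilinear plane. The tensor product of a metabolic bilinear form with a nonsingular quadratic form is hyperbolic, so $s_\ast(\varphi_K) = 0$ in $\W_q(F)$.

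\emph{Every element of $\ker(s_\ast)$ lies in $\im(r_\ast)$.} Let $\varphi$ be an anisotropic nonsingular form over $K$ with $s_\ast(\varphi) \sim 0$. I argue by induction on $\dim \varphi$; the case $\varphi = 0$ is trivial. Since $s_\ast(\varphi)$ is nonzero and hyperbolic, it is isotropic. Hence there is $0 \neq v \in V$ (the underlying $K$-space) with $s(\varphi(v)) = 0$. As $\ker(s) = F$, this means $\varphi(v) = c \in F^\ast$, using that $\varphi$ is anisotropic. Completing $v$ to a hyperbolic-type pair gives $\varphi \cong c[1, e] \perp \varphi'$ with $e = e_0 + e_1 \sqrt d \in K$ and $e_0, e_1 \in F$. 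In $\W_q(K)$ we have
\[c[1,e] = c[1,e_0] + c[1, e_1\sqrt d].\]
Thus, modulo $\im(r_\ast)$, the form $\varphi$ is Witt equivalent to $\psi = c[1, e_1\sqrt d] \perp \varphi'$, and $\psi$ still lies in $\ker(s_\ast)$ by the first part.

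\emph{The main obstacle.} This substitution alone does not lower the dimension. The key point is therefore to exploit the full hyperbolicity of $s_\ast(\varphi)$, not just its isotropy. Note that $\dim_F s_\ast(\varphi) = 2\dim_K\varphi$, so $s_\ast(\varphi)$ has an $F$-subspace $W$ that is totally isotropic of dimension $\dim_K \varphi$. The first thing I would try is to use $W$ to find two vectors $v, w$ spanning a $K$-plane such that $\varphi(v), \varphi(w) \in F$ and the polar form satisfies $b_\varphi(v,w) \in F^\ast$. If, for instance, $W \cap \sqrt d\,W \neq 0$ or $W$ is not a $K$-subspace, one gets enough room to choose them. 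The restriction of $\varphi$ to $Kv + Kw$ is then isometric to $\tau_K$ for a binary form $\tau$ defined over $F$. Writing $\varphi \cong \tau_K \perp \varphi''$, we get $s_\ast(\varphi'') \sim 0$ by the first part. Induction on $\varphi''$ then finishes the proof.

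If this direct choice of $v, w$ turns out to be awkward, my fallback is to compute $s_\ast(c[1, e_1\sqrt d])$ explicitly using \Cref{lem_basicComputationsTransfer}. The aim would be to show that after suitably rescaling $v$ (replacing it by $\lambda v$ with $\lambda \in K^\ast$ and $\lambda^2 c \in F$) one can arrange $e_1 = 0$. Then $c[1,e]$ is itself defined over $F$, and one passes directly to $\varphi'$.
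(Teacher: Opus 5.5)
Your proof that $s_\ast\circ r_\ast=0$, via the Gram matrix of $s_\ast(\bif 1)$, is correct. Your setup for the converse is exactly the paper's: an isotropic vector of $s_\ast(\varphi)$ gives $v$ with $\varphi(v)=c\in F^\ast$, hence $\varphi\cong c[1,e]\perp\varphi'$. But the proposal stops there. The inductive step is never proved, because you treat $c[1,e_1\sqrt d]$ as an obstacle when it is not one.

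\textbf{The missing observation.} For any $e$ one has $[1,e]\cong[1,e^2]$: substituting $X\mapsto X+eY$ replaces $e$ by $e+\wp(e)=e^2$. Moreover $e^2=e_0^2+e_1^2d\in F$, since $K=F(\sqrt d)$ satisfies $K^2\subseteq F$. So $c[1,e]\cong(c[1,e^2])_K$ is already defined over $F$; equivalently, $c[1,e_1\sqrt d]\cong c[1,e_1^2d]$. Your first part then gives $s_\ast(\varphi')=s_\ast(\varphi)=0$ in $\W_q(F)$. Since $\varphi'$ is anisotropic of smaller dimension, induction finishes the proof. This is precisely the paper's argument. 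Only the isotropy of $s_\ast(\varphi)$ is used, not a totally isotropic subspace of half dimension.

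\textbf{Why your workarounds do not close the gap.}

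The route via the subspace $W$ is only a heuristic. Its case distinction is vacuous, because a nonzero $K$-subspace $W$ satisfies $\sqrt d\,W=W$. The existence of suitable $v,w$ is never justified.

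The fallback aims at the right target, ``arrange $e_1=0$'', but uses the wrong mechanism. If $b_\varphi(v,w)=1$, then $e=\varphi(v)\varphi(w)$. Rescaling $(v,w)\mapsto(\lambda v,\lambda^{-1}w)$ leaves $e$ unchanged, and the condition $\lambda^2c\in F$ is automatic anyway. The freedom you need is in the partner vector: replacing $w$ by $w+\mu v$ changes $e$ to $e+\wp(c\mu)$, and choosing $\mu=e/c$ yields $e^2\in F$.
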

\begin{proof}
    It is known that $s_\ast \circ r_\ast = 0$ by \cite[Lemma 34.14]{ElmanKarpenkoMerkurjev2008}.
    So let now $\varphi$ be an anisotropic nonsingular form over $K$ such that $s_\ast(\varphi)$ is hyperbolic and let $V$ be the underlying vector space.
    Since $s_\ast(\varphi)$ is hyperbolic, it is isotropic, so there is $v \in V$ with $s(\varphi(v)) = 0$, i.e. $\varphi(v) \in F$.
    Since $\varphi$ is anisotropic, we have $a := \varphi(v) \neq 0$ and we can write $\varphi \cong a[1, x] \perp \varphi'$ for some $x \in K$.
    But $[1, x] \cong [1, x^2]$ and since $K = F(\sqrt d)$, we have $x^2 \in F$.
    Now we compute
    \[0 = s_\ast(\varphi) = s_\ast(a[1, x^2] + \varphi) = s_\ast(a[1, x^2]) + s_\ast(\varphi') = s_\ast(\varphi')\]
    and we conclude by induction on $\dim(\varphi)$.
\end{proof}

We will now give the construction of our invariant.

\begin{proposition} \label{prop_DK_ExistenceInv}
    Let $[D] \in \BrTwo(F)$ be such that $[D_L] = 0$.

    Then, there are $x, y \in K^\ast$ such that
    \[[D_K] = [\quat{a, x} \otimes \quat{b, y}]\]
    and 
    \begin{align*}
        \inv(D) &:= e^3(s_\ast(\qpf{x, a} \perp \qpf{y, b})) \in H_2^3(F) / (N(d; a, b) \wedge [D]),
    \end{align*}
    does not depend on the choice of $x, y$.
\end{proposition}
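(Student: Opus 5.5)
Existence comes from the biquadratic kernel result \eqref{BrauerKernelBiquadratic}, applied over $K$. Since $L = K(\wp^{-1}(a),\wp^{-1}(b))$ is a separable biquadratic extension of $K$ with quadratic subextensions $K(\wp^{-1}(a))$ and $K(\wp^{-1}(b))$, we get
\[[D_K] \in \BrTwo(L/K) = \BrTwo(K(\wp^{-1}(a))/K) + \BrTwo(K(\wp^{-1}(b))/K).\]
By the description of the kernels for separable quadratic extensions, this gives $[D_K] = [\quat{a,x}] + [\quat{b,y}]$ with $x,y\in K^\ast$. The form $\qpf{x,a}\perp\qpf{y,b}$ lies in $\I^2_q(K)$. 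Its transfer therefore lies in $\I^2_q(F)$, and we need it to lie in $\I^3_q(F)$ so that $e^3$ is defined. Its Clifford invariant is $s_\ast([\quat{a,x}]+[\quat{b,y}])$, which by \Cref{lem_basicComputationsTransfer}\ref{lem_basicComputationsTransfer2} equals $[\quat{a,\norm(x)}]+[\quat{b,\norm(y)}]$. On the other hand, it equals $s_\ast([D_K]) = s_\ast(r_\ast[D])$, and this vanishes because $s_\ast\circ r_\ast = 0$: in the separable case this is corestriction composed with restriction, which is multiplication by $2$, and in the inseparable case it follows from \cite[Lemma 34.14]{ElmanKarpenkoMerkurjev2008}. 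So the transfer lies in $\I^3_q(F)$ and $\inv(D)$ is defined.

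For independence, suppose $(x',y')$ is another choice. Then $[\quat{a,x/x'}] = [\quat{b,y'/y}]$ in $\BrTwo(K)$. Call this common class $\gamma$. It is split by both $K(\wp^{-1}(a))$ and $K(\wp^{-1}(b))$, hence also by $E$, since $\gamma = [\quat{a+b,\cdot}]$ can be rewritten using $[\quat{a,u}] = [\quat{b,u}] + [\quat{a+b,u}]$. The key step is to identify such classes. A class killed by two distinct separable quadratic extensions $K(\wp^{-1}(a))$ and $K(\wp^{-1}(b))$ of $K$ is expected to be of the form $[\quat{a,z}]$ with $z\in K^\ast$ satisfying $[\quat{a,z}] = [\quat{b,z}]$, that is $[\quat{a+b,z}] = 0$, i.e.\ $z\in \norm_{E/K}(E^\ast)$ up to modifying $x/x'$ by norms from $K(\wp^{-1}(a))$. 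Concretely, I would write $x' = x z n_1$ and $y' = y z n_2$, where $n_1 \in \norm(K(\wp^{-1}(a))^\ast)$, $n_2\in\norm(K(\wp^{-1}(b))^\ast)$ and $z\in\norm_{E/K}(E^\ast)$. I expect this step, which amounts to a common-slot type argument over $K$ for $\quat{a,\cdot}$ and $\quat{b,\cdot}$, to be the main obstacle. I would first try to prove it via the exact sequence for the quadratic extension $E/K$ applied to the quaternion norm forms, or by using that $\qpf{x/x',a}\perp\qpf{y/y',b}$ is hyperbolic together with a linkage argument.

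Granting this, the norms $n_1,n_2$ do not change $\qpf{x,a}$ and $\qpf{y,b}$. Modulo $\I^3_q$-computations, the difference of the two forms is
\[\qpf{xz,a}-\qpf{x,a} \equiv \bipf{z}\otimes\qpf{x,a}, \qquad \qpf{yz,b}-\qpf{y,b} \equiv \bipf{z}\otimes\qpf{y,b}.\]
These identities hold exactly in $\W_q(K)$ up to the term $\qpf{z,a}$; indeed $\qpf{xz,a} = \qpf{x,a}+\qpf{z,a}-\bipf{x,z}\otimes[1,a]$. So the difference of the $e^3$-values is
\[e^3\bigl(s_\ast(\bipf{z}\otimes(\qpf{x,a}\perp\qpf{y,b}))\bigr) = s_\ast\bigl(\dlog z\wedge [D_K]\bigr),\]
where the contributions $\qpf{z,a}+\qpf{z,b} = \qpf{z,a+b}$ are hyperbolic because $z$ is a norm from $E$. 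Writing $z = \norm_{E/K}(w)$, and using projection formulas for cohomology transfer together with the fact that $[D_K]$ is restricted from $F$, we get $s_\ast(\dlog z \wedge [D_K]) = \dlog(\norm_{K/F} z)\wedge[D]$. Since $\norm_{K/F}(z) = \norm_{E/F}(w)$, this lies in $N(d;a,b)\wedge[D]$, which proves independence.
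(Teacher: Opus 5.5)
Your outline follows the paper's proof step by step:
\begin{enumerate}
\item existence comes from the biquadratic kernel over $K$;
\item the transfer lands in $\I^3_q(F)$ because $s_\ast\circ\res=0$ on $\BrTwo$;
\item independence comes from the identity $\varphi\perp\varphi'=\bipf{z}\otimes\varphi$ in $\W_q(K)$ with $z\in\norm_{E/K}(E^\ast)$, followed by the projection formula and $\norm_{K/F}\circ\norm_{E/K}=\norm_{E/F}$.
\end{enumerate}
However, the one non-formal step is only ``granted'', so independence is not actually proved. That step is: $[\quat{a,u}]=[\quat{b,v}]$, with $u=xx'$ and $v=yy'$, forces $u\in z\,\D_K([1,a])$ and $v\in z\,\D_K([1,b])$ for a \emph{common} $z\in\D_K([1,a+b])=\norm_{E/K}(E^\ast)$.

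The paper obtains this from \cite[Lemma 5.3]{ChapmanLaghribiMukhija_I3H3SymbolLength}. There, $\qpf{u,a}$ becomes hyperbolic over $K(\wp^{-1}(b))$, hence $u\in K^\ast\cap\D_{K(\wp^{-1}(b))}([1,a])=\D_K([1,a])\D_K([1,a+b])$.

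Your linkage idea also closes the gap in a few lines:
\begin{enumerate}
\item Hyperbolicity of $\qpf{u,a}\perp\qpf{v,b}$ gives $u[1,a]\perp v[1,b]\sim[1,a]\perp[1,b]\sim[1,a+b]$, so this $4$-dimensional form is isotropic.
\item Since $[1,a]$ and $[1,b]$ are anisotropic over $K$, there are $\alpha\in\D_K([1,a])$ and $\beta\in\D_K([1,b])$ with $z:=u\alpha=v\beta$.
\item Then $z[1,a+b]\perp\H\cong z([1,a]\perp[1,b])\cong u[1,a]\perp v[1,b]\cong[1,a+b]\perp\H$.
\item Witt cancellation gives $z[1,a+b]\cong[1,a+b]$, i.e.\ $z\in\D_K([1,a+b])$, while $u=z\alpha^{-1}$ and $v=z\beta^{-1}$.
\end{enumerate}
This argument, or the citation, must be part of the proof; it is the only place where anything beyond bookkeeping happens.

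Also remove the aside that the common class $\gamma$ is split by $E$. It is false in general, and your justification via $[\quat{a,u}]=[\quat{b,u}]+[\quat{a+b,u}]$ only yields $[\quat{a+b,u}]=[\quat{b,uv}]$. A quaternion algebra can contain two of the three quadratic subextensions of $L/K$ without containing the third. The characteristic $\neq 2$ analogue: $(-1,-1)_{\Q}$ is split by $\Q(\sqrt{-1})$ and $\Q(\sqrt{-2})$ but not by $\Q(\sqrt{2})$. What your computation really uses is $[\quat{a+b,z}]=0$ for the particular $z$. That is a different statement, and it is exactly what the lemma above provides.
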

\begin{proof}
    We have 
    \[\ind(D_K)\mid \ind(D_L)\cdot [L:K] = 4\]
    by \cite[Corollary 4.5.11]{GilleSzamuely_CSAGalCohomology}, so we know that $[D_K] = [B]$ for some biquaternion algebra $B$ defined over $K$.
    Since $[D_K] \in \BrTwo(L/K)$, using \eqref{BrauerKernelBiquadratic}, we get
   \begin{align} \label{eq_D_KAlbertForm}
        [D_K] = [\quat{a, x} \otimes \quat{b, y}]
    \end{align}
    for some $x, y \in K^\ast$.

    By \cite[(6.4) Proposition]{MR1154404} ($K / F$ separable) resp. \cite[Theorem 4.2]{AravireLaghribiORyan_TransferInsep} ($K / F$ purely inseparable) we have an exact sequence
    \begin{align}\label{eq_ComplexExtTransfer}
        \begin{diagram}
            \BrTwo(F) & \rTo^{\res_{K/F}} & \BrTwo(K) & \rTo^{s_\ast} & \BrTwo(F)
        \end{diagram}
    \end{align}
    induced by scalar extension and the transfer induced by $s$ on central simple algebras.
    Using \Cref{lem_basicComputationsTransfer} \ref{lem_basicComputationsTransfer2}, we thus have
    \begin{align*}
        0 = [s_\ast(D_K)] &= [s_\ast(\quat{a, x} \otimes \quat{b, y})]\\
        &= [s_\ast(\quat{a, x})] \cdot [s_\ast(\quat{b, y})]\\
        &= [\quat{a, \norm_{K/F}(x)}] \cdot [\quat{b, \norm_{K/F}(y)}].
    \end{align*}
    This implies $\quat{a, \norm_{K/F}(x)} \cong \quat{b, \norm_{K/F}(y)}$, so for the norm forms of the respective quaternion algebras, we obtain
    \begin{align}\label{eq_EqualityOfPfisterforms}
        \qpf{\norm_{K/F}(x), a} \cong \qpf{\norm_{K/F}(y), b}.
    \end{align}

    We now consider the Albert form $\varphi$ of the biquaternion algebra $D_K$ according to the decomposition in \Cref{eq_D_KAlbertForm}, i.e. $\varphi = [1, a+b] \perp x[1, a] \perp y[1, b]$.
    Using the exactness of 
    \[\begin{diagram}
        \W_q(F) & \rTo^{r_\ast} & \W_q(K) & \rTo^{s_\ast} & \W_q(F)
    \end{diagram}\]
    (see \cite[Theorem 34.9]{ElmanKarpenkoMerkurjev2008} and \Cref{lem_TransferExactSequenceInsep}), Frobenius reciprocity and \Cref{lem_basicComputationsTransfer} \ref{lem_basicComputationsTransfer1}, we obtain
    \begin{align*}
        s_\ast(\varphi) &\cong s_\ast(x[1, a] \perp y[1, b]) \\
        &\cong s_\ast(\bif{x}) \otimes [1, a] \perp s_\ast(\bif y) \otimes [1, b]\\
        &\cong s(x)\bipf{\norm_{K/F}(x)} \otimes [1, a] \perp s(y) \bipf{\norm_{K/F}(y)} \otimes [1, b] \\
        &\cong s(x) \qpf{\norm_{K/F}(x), a} \perp s(y) \qpf{\norm_{K/F}(y), b}.
    \end{align*}
    Using \Cref{eq_EqualityOfPfisterforms}, we finally see
    \begin{align}\label{eq_TrVarphiInI3}
        s_\ast(\varphi) &\cong s(x) \qpf{\norm_{K/F}(x), a} \perp s(y) \qpf{\norm_{K/F}(x), a} \nonumber \\
        &\cong s(x) \qpf{s(x)s(y), \norm_{K/F}(x), a} \in \GP_3(F).
    \end{align}

    Now suppose we have another decomposition 
    \[[D_K] = [\quat{a, x'}] \otimes [\quat{b, y'}]\]
    with corresponding Albert form $\varphi' = [1, a+b] \perp x'[1, a] \perp y'[1, b]$.
    We thus have $\quat{a, xx'} \cong \quat{b, yy'} \in \BrTwo(K(\wp^{-1}(a))/K) \cap \BrTwo(K(\wp^{-1}(b) / K)$, from which we obtain
    \begin{align*}
        \qpf{xx', a} \cong \qpf{yy', b}.
    \end{align*}
    Over $E' = K(\wp^{-1}(b))$, the form 
    \begin{align*}
        [1, a]_{E'} \perp xx'[1, a]_{E'} \cong \qpf{xx', a}_{E'} \cong \qpf{yy', b}_{E'}
    \end{align*}
    is clearly hyperbolic, so that we have $[1, a]_{E'} \cong xx'[1, a]_{E'}$ and by invoking \cite[Lemma 5.3]{ChapmanLaghribiMukhija_I3H3SymbolLength}, we obtain 
    \[xx' \in K^\ast \cap \D_{E'}([1, a]) = \D_K([1, a])\D_K([1, a + b]).\]
    Further using the roundness of quadratic Pfister forms and noting that 
    \[\D_K([1, a + b]) = \im(\norm_{E/K}) \setminus \{0\},\] 
    (recall the notation $E = K(\wp^{-1}(a + b))$) there exists $\gamma \in E^\ast$ such that 
    \[\qpf{\norm_{E / K}(\gamma), a} \cong \qpf{xx', a}.\]

    In $\W_q(K)$, using \cite[Lemma 15.1 (1)]{ElmanKarpenkoMerkurjev2008}, we obtain
    \[\qpf{\norm_{E / K}(\gamma), a} + \qpf{\norm_{E / K}(\gamma), b} = \qpf{\norm_{E / K}(\gamma), a + b}.\]
    But since $\norm_{E / K}(\gamma) \in \D_K([1, a+b])$, the latter form is hyperbolic, yielding 
    \[\qpf{xx', a} \cong \qpf{\norm_{E / K}(\gamma), a} \cong \qpf{\norm_{E / K}(\gamma), b} \cong \qpf{yy', b}.\]

    Using this isometry, in $\W_q(K)$, we obtain 
    \begin{align*}
        \varphi \perp \varphi' &= x[1, a] \perp y[1, b] \perp x'[1, a] \perp y'[1, b] \\
        &= x\qpf{xx', a} \perp y\qpf{yy', b}\\
        &= x \qpf{\norm_{E / K}(\gamma), a} \perp y \qpf{\norm_{E / K}(\gamma), b} \\
        &= \qpf{x, \norm_{E / K}(\gamma), a} \perp \qpf{\norm_{E / K}(\gamma), a} \\
        &\quad\quad\quad \perp \qpf{y, \norm_{E / K}(\gamma), b} \perp \qpf{\norm_{E / K}(\gamma), b} \\
        &= \qpf{\norm_{E / K}(\gamma), x, a} \perp \qpf{\norm_{E / K}(\gamma), y, b}\\
        &= \bipf{\norm_{E / K}(\gamma)} \otimes \varphi.
    \end{align*}

    Recall that by \Cref{eq_TrVarphiInI3}, we have $s_\ast(\varphi), s_\ast(\varphi') \in \GP_3(F)$.
    So using \cite[Proposition 3.3]{BarryChapmanLaghribi_DescentBiquaternion} ($K / F$ separable) resp. \cite[Proposition 5.1 (ii)]{AravireLaghribiORyan_TransferInsep} ($K / F$ inseparable), we obtain
    \begin{align} \label{eq_e3TraceSum}
        e^3(s_\ast(\varphi)) + e^3(s_\ast(\varphi')) &= e^3(s_\ast(\varphi \perp \varphi'))\nonumber \\
        &= s_\ast(e^3(\varphi \perp \varphi')) \nonumber\\
        &= s_\ast(e^3(\bipf{\norm_{E / K}(\gamma)} \otimes \varphi)) \nonumber\\
        &= s_\ast\left({\overline{\dlog\norm_{E / K}(\gamma)}} \wedge [D_K] \right).
    \end{align}
    Using Frobenius Reciprocity and \Cref{lem_basicComputationsTransfer} \ref{lem_basicComputationsTransfer1} together with the compatibility of the norm with field extensions, the latter expression equals
    \[\overline{\dlog \norm_{E / F}(\gamma)} \wedge [D].\]
    Thus, 
    \[e^3(s_\ast(\varphi)) \in H_2^3(F) / (N(d; a, b) \wedge [D])\]
    does not depend on the choice of $x, y \in K^\ast$ in \Cref{eq_D_KAlbertForm} as claimed.
\end{proof}

The following easy case will be used frequently.

\begin{corollary} \label{cor_inv0ifDefOverF}
    If $[D_K] = [\quat{a, x} \otimes \quat{b, y}]$ with $x, y \in F^\ast$ (which implies $\delta_{K / F}(D_K) = 0$ if $K/F$ is separable), then $\inv(D) = 0$.
\end{corollary}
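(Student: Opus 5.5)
By \Cref{prop_DK_ExistenceInv}, $\inv(D)$ does not depend on the choice of $x, y$. It therefore suffices to compute $e^3(s_\ast(\qpf{x, a} \perp \qpf{y, b}))$ for the given $x, y \in F^\ast$ and show that this class is already $0$ in $H_2^3(F)$.

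The plan is to write $\qpf{x, a} \perp \qpf{y, b}$ as the scalar extension of an $F$-form. Since $x, y, a, b \in F$, we have
\[\qpf{x, a}_K \perp \qpf{y, b}_K = \bigl(\qpf{x, a} \perp \qpf{y, b}\bigr)_K,\]
where the form in parentheses is defined over $F$. Equivalently, the Albert form $\varphi = [1, a+b] \perp x[1,a] \perp y[1,b]$ used in the proof of \Cref{prop_DK_ExistenceInv} is $\psi_K$ for the $F$-form $\psi = [1,a+b]\perp x[1,a]\perp y[1,b]$. By \cite[Lemma 34.14]{ElmanKarpenkoMerkurjev2008}, $s_\ast \circ r_\ast = 0$ on $\W_q$; this holds in both the separable and the inseparable case, as already used in \Cref{lem_TransferExactSequenceInsep}. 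Hence $s_\ast(\psi_K) = 0$ in $\W_q(F)$. One can also see this directly through Frobenius reciprocity: $s_\ast(\bif{1}_K \otimes \psi) = s_\ast(\bif 1)\otimes \psi$, and by \Cref{lem_basicComputationsTransfer}\ref{lem_basicComputationsTransfer1} we have $s_\ast(\bif 1) = s(1)\bipf{1} = 0$ because $s(1)=0$.

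It follows that $s_\ast(\qpf{x,a}\perp\qpf{y,b})$ is hyperbolic, so its image under $e^3$ vanishes and $\inv(D)=0$ in the quotient. The definition of $\inv$ uses $\qpf{x,a}\perp\qpf{y,b}$, whereas the independence argument worked with the Albert form. This causes no trouble: both are extended from $F$ when $x,y\in F^\ast$, so the same argument applies to either, and no real obstacle arises.

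For the parenthetical remark, suppose $K/F$ is separable. Then $D_K$ is Brauer equivalent to the biquaternion algebra $(\quat{a,x}\otimes\quat{b,y})_K$, which is defined over $F$, so $\delta_{K/F}(D_K)=0$ by the defining property of $\delta_{K/F}$ from \cite{BarryChapmanLaghribi_DescentBiquaternion}.
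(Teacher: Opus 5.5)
Your proof is correct and is the paper's argument: for $x,y\in F^\ast$ the relevant form ($\qpf{x,a}\perp\qpf{y,b}$, or the Witt-equivalent Albert form) is extended from $F$, so its transfer vanishes and $\inv(D)=0$. Your side remark via \Cref{lem_basicComputationsTransfer} with $x=1$ is loose, since $s(1)=0$ makes that formula degenerate; still, $s_\ast(\bif{1})$ is metabolic and hence zero in $\W(F)$, so the conclusion stands.
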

\begin{proof}
    If $x, y \in F^\ast$, then $\varphi = [1, a + b] \perp x[1, a] \perp y[1, b]$ is defined over $F$ and so, $s_\ast(\varphi) = 0$, leading to $\inv(D) = 0$.
\end{proof}

\begin{remark} \label{rem_EntriesOfBiquaternion}
	The argument in \Cref{prop_DK_ExistenceInv} shows, that if $\gamma \in E^\ast$ occuring in \Cref{eq_e3TraceSum} fulfils $\qpf{xx', a} \cong \qpf{\norm_{E/K}(\gamma), a}$, then 
	\[e_3(s_\ast(\varphi)) + e_3(s_\ast(\varphi')) = \overline{\dlog(\norm_{E/F}(\gamma))} \wedge [D].\]
\end{remark}

In the following remark, we compare our invariant to possible generalizations to other triquadratic field extensions.
We will see that the other cases do not admit an analogous invariant or that such an invariant would not carry any information because it vanishes for all algebras in question.

\begin{remark} \label{rem_OtherExtensions}
    So far, we only considered the case in which $L/K$ is a separable multiquadratic extension of degree 4.
    
    If the separability degree of $L / F$ is at most 2 and if $D \in \BrTwo(L/F)$, then $[D_K]$ is given by the class of a biquaternion algebra defined over $F$ by \cite[Corollary 1 (2)]{AravireLaghribiMultiQuadratic}.
    Hence, in these cases, the transfer of the corresponding Albert form will be 0 and hence, $\inv(D) = 0$.
    
    The only case not considered so far is thus  
    \[L = K(\sqrt a, \wp^{-1}(b)).\] 
    with $K / F$ separable. 
    But for the definition of our invariant, we used an intermediate field lying between $K$ and $L$ which is neither $K(\wp^{-1}(a))$ nor $K(\wp^{-1}(b))$, namely $E = K(\wp^{-1}(a + b))$.
    By basic field theory, there is no such field if $L = K(\sqrt a, \wp^{-1}(b))$, which shows that no direct analogue of our invariant exists in this case.
\end{remark}

\section{Triviality of the Invariant} \label{sec_InvTrivial}

The aim of this section is to give conditions under which $\inv(D) = 0$ for a given central simple $F$-algebra $D$ which is split over $L$.
In particular, we will point out connections to the decomposibility of its underlying division algebra, see \Cref{prop_invTrivial}.

\begin{lemma} \label{lem_AlbertDefinedOverF}
    If $\inv(D) = 0$, then there are $x, y \in K^\ast$ such that $[D_K] = [\quat{a, x} \otimes \quat{b, y}$ and the corresponding Albert form
   \[[1, a + b] \perp x [1, a] \perp y[1, b]\]
    is defined over $F$. 
\end{lemma}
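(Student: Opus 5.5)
The plan is to use the freedom in the choice of $x,y$ from \Cref{prop_DK_ExistenceInv} to absorb the element of $N(d;a,b)\wedge[D]$. Fix $x,y\in K^\ast$ with $[D_K]=[\quat{a,x}\otimes\quat{b,y}]$, and let $\varphi=[1,a+b]\perp x[1,a]\perp y[1,b]$ be the corresponding Albert form. The hypothesis $\inv(D)=0$ means that there is $\gamma\in E^\ast$ with
\[e^3(s_\ast(\varphi))=\overline{\dlog(\norm_{E/F}(\gamma))}\wedge[D].\]
Set $n:=\norm_{E/K}(\gamma)\in\D_K([1,a+b])$, and put $x':=xn$ and $y':=yn$.

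First I check that $(x',y')$ is again an admissible pair. We have
\[[\quat{a,x'}\otimes\quat{b,y'}]=[D_K]+[\quat{a,n}]+[\quat{b,n}]=[D_K]+[\quat{a+b,n}],\]
and $\quat{a+b,n}$ splits because $n$ is a norm from $E=K(\wp^{-1}(a+b))$. Let $\varphi'=[1,a+b]\perp x'[1,a]\perp y'[1,b]$ be the new Albert form. Since $xx'=x^2n$, we have $\qpf{xx',a}\cong\qpf{n,a}$. Hence $\gamma$ satisfies the hypothesis of \Cref{rem_EntriesOfBiquaternion}, and that remark gives
\[e^3(s_\ast\varphi)+e^3(s_\ast\varphi')=\overline{\dlog\norm_{E/F}(\gamma)}\wedge[D]=e^3(s_\ast\varphi).\]
Therefore $e^3(s_\ast\varphi')=0$.

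Next, by \Cref{eq_TrVarphiInI3} applied to $(x',y')$, the form $s_\ast(\varphi')$ is Witt equivalent to an element of $\GP_3(F)$. Since its $e^3$ vanishes, Kato's isomorphism puts it in $\I_q^4(F)$. An anisotropic form similar to a $3$-fold Pfister form has dimension $8<16$, so the Arason--Pfister Hauptsatz forces $s_\ast(\varphi')=0$ in $\W_q(F)$. By the exactness of $\W_q(F)\to\W_q(K)\to\W_q(F)$ (\cite[Theorem 34.9]{ElmanKarpenkoMerkurjev2008} in the separable case, \Cref{lem_TransferExactSequenceInsep} in the inseparable case), there is a nonsingular form $\psi$ over $F$ with $\varphi'\sim\psi_K$.

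The remaining step, which I expect to be the main obstacle, is upgrading this Witt equivalence to an isometry $\varphi'\cong\rho_K$ with $\rho$ an $F$-form of dimension $6$. I would choose $\psi$ anisotropic of minimal dimension among $F$-forms with $\psi_K\sim\varphi'$. If $\psi_K$ is anisotropic, then $\dim\psi=\dim(\varphi')_\an\le 6$ and $\dim\psi\equiv 6\pmod 2$. Witt cancellation then gives $\varphi'\cong(\psi\perp m\H)_K$ with $m=(6-\dim\psi)/2$, which is the required descent.

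If $\psi_K$ is isotropic, I would first try to contradict minimality. In the separable case, an anisotropic $F$-form that becomes isotropic over $K$ contains a subform similar to the norm form $[1,d]$ of $K/F$. This subform is hyperbolic over $K$, so removing it gives a smaller $\psi$. In the inseparable case $K=F(\sqrt d)$, I would try the analogous fact that $\psi$ then contains a binary subform $c[1,e]$ whose extension to $K$ is hyperbolic. If that analogue fails, I would instead argue via the dimension count inside the proof of \Cref{lem_TransferExactSequenceInsep}, which peels off $F$-rational binary subforms $a[1,x^2]$ of $\varphi'$ directly and so produces the $F$-form of the right dimension.
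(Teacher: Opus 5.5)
Your proposal is correct and is essentially the paper's proof. The paper likewise passes from $(\tilde x,\tilde y)$ to $(\tilde x^{-1}\norm_{E/K}(\gamma),\tilde y^{-1}\norm_{E/K}(\gamma))$ (your $xn$ differs from this only by a square), uses \Cref{rem_EntriesOfBiquaternion} to get $e^3(s_\ast(\varphi'))=0$, and concludes with the Arason--Pfister Hauptsatz; it simply asserts ``so $\varphi$ is defined over $F$'', whereas you also upgrade the Witt equivalence $\varphi'\sim\psi_K$ to an isometry.

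In that extra step, your first idea for the inseparable case cannot work, because an anisotropic binary nonsingular $F$-form stays anisotropic over $F(\sqrt d)$ (since $\wp(F(\sqrt d))\cap F=\wp(F)$). What does the job is your fallback: run the peeling argument of \Cref{lem_TransferExactSequenceInsep} on $(\varphi')_{\an}$, which yields an isometry with an $F$-form of the same dimension.
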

\begin{proof}
    Suppose $\inv(D) = 0$.
    We have to prove that $[D_K] = [\quat{a, x} \otimes \quat{b, y}$ for some $x, y \in K^\ast$ such that the corresponding Albert form is defined over $F$.
    By the first part of \Cref{prop_DK_ExistenceInv}, there are $\Tilde x, \Tilde y \in K^\ast$ such that 
    \[[D_K] = [\quat{a, \Tilde x} \otimes \quat{b, \Tilde y}].\]
    We consider the Albert form
    \[\Tilde\varphi = [1, a + b] \perp \Tilde x [1, a] \perp \Tilde y[1, b]\]
    corresponding to $\quat{a, \Tilde x} \otimes \quat{b, \Tilde y}$.
    If $\Tilde\varphi$ is defined over $F$, we are done.
    
    Otherwise, since $\inv(D) = 0$, by definition of $\inv$, there is $\gamma \in E^\ast$ such that 
    \begin{align} \label{eq_ChoiceGamma}
        e^3(s_\ast(\tilde \varphi)) = \overline{\dlog(\norm_{E/F}(\gamma))} \wedge [D]).
    \end{align}
    Let $x = \Tilde x^{-1} \cdot \norm_{E/K}(\gamma)$ and $ y = \Tilde y^{-1} \cdot \norm_{E/K}(\gamma)$.
    Using
    \begin{align}\label{eq_normRepresented}
        \norm_{E/K}(\gamma) \in \D_K([1, a + b]),
    \end{align}
    in $\BrTwo(K)$ we have 
    \begin{align*}
        \quat{a, x} \otimes \quat{b, y} \otimes \quat{a, \Tilde{x}} \otimes \quat{b, \Tilde{y}} &= \quat{a, x\Tilde{x}} \otimes \quat{b, y\Tilde{y}}\\
        &= \quat{a, \norm_{E/F}(\gamma)} \otimes \quat{b, \norm_{E/K}(\gamma)}\\
        &= \quat{a + b, \norm_{E/K}(\gamma)} = 0
    \end{align*}
    and thus 
    \[\quat{a, x} \otimes \quat{b, y} \cong \quat{a, \Tilde{x}} \otimes \quat{b, \Tilde{y}}.\]
    Therefore, 
    \[{\varphi} = [1, a + b] \perp x [1, a] \perp y[1, b]\]
    is also an Albert form of $D_K$.
    Since we have $\quat{a, x\Tilde{x}} \cong \quat{a, \norm_{E/K}(\gamma)}$, we have 
    \[e^3(s_\ast(\varphi)) + e^3(s_\ast(\Tilde{\varphi})) = \overline{\dlog \norm_{E/F}(\gamma)} \wedge [D],\]
    by \Cref{rem_EntriesOfBiquaternion} and by the choice of $\gamma$.
    Using \eqref{eq_ChoiceGamma} we obtain 
    \[e^3(s_\ast({\varphi})) = 0.\]
    Now since $s_\ast({\varphi}) \in \GP_3(F)$, this form is hyperbolic by the Arason-Pfister Hauptsatz, so ${\varphi}$ is defined over $F$.
\end{proof}

Note that in the above proof, when defining $x$ and  $y$, we differed from Sivatski's proof because we considered the norm $\norm_{E/K}$ instead of the norm $\norm_{E/F}$.
But in fact, this does not change the argument since \eqref{eq_normRepresented} holds in both cases.

The next auxiliary result shows that vanishing of $\inv$ does not change when tensoring with biquaternion algebras that have a decomposition corresponding to the fields under consideration.

\begin{lemma} \label{lem_reducing}
    Let $A \in \BrTwo(L / F)$ with $\inv(A) = 0$.
    Then for all $c, e \in F^\ast$, we have 
    \[\inv(A \otimes \quat{a+b, c} \otimes \quat{d, e}) = 0 \quad \text{ resp. } \quad \inv(A \otimes \quat{a+b, c} \otimes \quat{e, d}) = 0,\]
    depending on whether $K / F$ is separable or not.
\end{lemma}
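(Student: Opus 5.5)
The plan is to reduce everything to \Cref{lem_AlbertDefinedOverF} and compute the invariant of the new algebra directly from a well-chosen decomposition over $K$. Put $A' = A \otimes \quat{a+b, c} \otimes \quat{d, e}$ in the separable case and $A' = A \otimes \quat{a+b, c} \otimes \quat{e, d}$ in the inseparable case. First check that $[A'] \in \BrTwo(L/F)$, so that $\inv(A')$ is defined.
\begin{itemize}
\item The algebra $A$ splits over $L$ by hypothesis.
\item The algebra $\quat{a+b, c}$ splits over $E = K(\wp^{-1}(a+b)) \subseteq L$.
\item The last factor splits already over $K$: it is $\quat{d, e}$ with $K = F(\wp^{-1}(d))$ in the separable case, and $\quat{e, d}$ with $K = F(\sqrt d)$ in the inseparable case.
\end{itemize}

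Since $\inv(A) = 0$, \Cref{lem_AlbertDefinedOverF} gives $x, y \in K^\ast$ with $[A_K] = [\quat{a, x} \otimes \quat{b, y}]$, and the Albert form $\varphi = [1, a+b] \perp x[1,a] \perp y[1,b]$ is defined over $F$. In particular $\psi := x[1,a] \perp y[1,b]$ is Witt equivalent over $K$ to $\varphi - [1,a+b]$, which lies in the image of $r_\ast \colon \W_q(F) \to \W_q(K)$. Hence $s_\ast(\psi) = 0$ in $\W_q(F)$, because $s_\ast \circ r_\ast = 0$.

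Next compute $[A'_K]$. The last factor disappears over $K$. Using the relations in $\BrTwo$,
\[[A'_K] = [\quat{a, x}] + [\quat{b, y}] + [\quat{a, c}] + [\quat{b, c}] = [\quat{a, xc} \otimes \quat{b, yc}],\]
where $xc, yc \in K^\ast$. This decomposition is admissible in \Cref{prop_DK_ExistenceInv}, with Albert form $\varphi' = [1, a+b] \perp c\,\psi$.

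Now compute the transfer. The form $[1,a+b]$ is defined over $F$, so $s_\ast$ kills it. Since $c \in F^\ast$, Frobenius reciprocity gives $s_\ast(c\,\psi) = c\, s_\ast(\psi) = 0$. Therefore $s_\ast(\varphi') = 0$ in $\W_q(F)$ and $e^3(s_\ast(\varphi')) = 0$. The invariant $\inv(A')$ does not depend on the chosen decomposition by \Cref{prop_DK_ExistenceInv}, so its class is zero in $H_2^3(F)/(N(d;a,b) \wedge [A'])$.

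The only delicate point is to use the particular $x, y$ from \Cref{lem_AlbertDefinedOverF}, rather than arbitrary ones. The statement ``$\varphi$ is defined over $F$'' must then be read as a statement about Witt classes, so that $s_\ast(\psi)=0$. For the inseparable case this uses \Cref{lem_TransferExactSequenceInsep}, or just $s_\ast \circ r_\ast = 0$. The rest is routine.
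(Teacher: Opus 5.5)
Your proof is correct and follows the paper's argument. Take the $x, y$ from \Cref{lem_AlbertDefinedOverF}; tensoring with $\quat{a+b, c}$ multiplies both slots by $c$, and the last factor splits over $K$, so $[1, a+b] \perp xc[1,a] \perp yc[1,b]$ is an Albert form for the new algebra that is still defined over $F$. The differences are only cosmetic: you check explicitly that the new algebra lies in $\BrTwo(L/F)$ and compute $s_\ast$ of this Albert form directly via Frobenius reciprocity, whereas the paper cites \Cref{cor_inv0ifDefOverF}, whose proof is exactly that computation.
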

\begin{proof}
    By \Cref{lem_AlbertDefinedOverF}, there are $x, y \in K^\ast$ such that 
    \[[A_{K}] = [\quat{a, x} \otimes \quat{b, y}]\]
    and such that $x [1, a] \perp y[1, b]$ is defined over $F$.
    For all $c \in F^\ast$, we have
    \[[(A \otimes \quat{a + b, c})_K] = [\quat{a, x} \otimes \quat{b, y} \otimes \quat{a, c} \otimes \quat{b, c}] = [\quat{a, x c} \otimes \quat{b, y c}]\]
    and clearly, its Albert form $[1, a+b] \perp x c[1, a] \perp y c [1, b]$ is also defined over $F$.
    
    Obviously, this is also an Albert form for $(A \otimes \quat{a+b, c} \otimes \quat{d, e})_K$ for all $e \in F^\ast$ since $\quat{d, e}_K$ is split if $K/F$ is separable.
    Similarly it is an Albert form for $(A \otimes \quat{a+b, c} \otimes \quat{e, d})_K$ for all $e \in F$ if $K/F$ is inseparable.
    The assertion now follows from \Cref{cor_inv0ifDefOverF}.
\end{proof}

We are now ready to prove the main statement of this section.

\begin{proposition} \label{prop_invTrivial}
    The following conditions are equivalent:
    \begin{enumerate}
        \item\label{prop_invTrivial1} we have $\inv(D) = 0$;
        \item\label{prop_invTrivial2} there is a quaternion algebra $Q$ over $F$, $v \in F^\ast$ and $u \in F^\ast$ resp. $u \in F$ such that 
        \[[D] = \begin{cases}
            [Q \otimes \quat{a + b, v} \otimes \quat{d, u}], & \text{ if $K/F$ is separable}\\
            [Q \otimes \quat{a + b, v} \otimes \quat{u, d}], & \text{ if $K/F$ is inseparable;}
            \end{cases}
        \]
        \item\label{prop_invTrivial3} The division algebra Brauer equivalent to $D_E$ is defined over $F$. 
    \end{enumerate}
\end{proposition}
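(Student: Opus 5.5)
I would prove the cycle (i)$\Rightarrow$(ii)$\Rightarrow$(i), and then (ii)$\Leftrightarrow$(iii), treating the separable and inseparable cases of $K/F$ in parallel. The only change between the two cases is that $\BrTwo(K/F)$ consists of the classes $\quat{d,u}$ in the separable case and of the classes $\quat{u,d}$ in the inseparable case.

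\textbf{(i)$\Rightarrow$(ii).} By \Cref{lem_AlbertDefinedOverF} there are $x,y\in K^\ast$ with $[D_K]=[\quat{a,x}\otimes\quat{b,y}]$ such that the Albert form $\varphi=[1,a+b]\perp x[1,a]\perp y[1,b]$ is isometric to $\psi_K$ for some $F$-form $\psi$. Since $\varphi$ has trivial Arf invariant, and the Arf invariant maps $F/\wp(F)$ injectively into $K/\wp(K)$ away from the class of $d$, the form $\psi$ may be taken to be an Albert form over $F$, after adjusting by $[1,d]$ if necessary. Then $C(\psi)$ is a biquaternion $F$-algebra $B$ with $[B_K]=[D_K]$. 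Hence $[D]-[B]\in\BrTwo(K/F)$, so $[D]=[B]+[\quat{d,u}]$ (resp.\ $[\quat{u,d}]$).

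It remains to bring $B$ into the shape $Q\otimes\quat{a+b,v}$. For this I use that $B_L$ is split, since $[D_L]=0$ and $L\supseteq K$. I would look more closely: $\psi_K\cong\varphi$ contains $[1,a+b]$, and $\varphi_E$ is isotropic. I would aim to show that $\psi$ itself represents an element making $\psi\cong c[1,a+b]\perp\psi'$, so that $B\sim\quat{a+b,v}\otimes Q$ over $F$.

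The cleanest route is to use the freedom in choosing $x,y$. Modify $x,y$ by elements of $F^\ast$ (which keeps $\varphi$ defined over $F$) and note that $\varphi\perp[1,a+b]$ is Witt equivalent to $x\qpf{xy,a}\perp\ldots$. The expected main obstacle is exactly this: producing the explicit decomposition of $[B]$ with a factor $\quat{a+b,v}$. The alternative I would try is the following. The element $[D]-[\quat{d,u}]$ splits over $F(\wp^{-1}(a),\wp^{-1}(b))$ after extending to $K$. Using the kernel description \eqref{BrauerKernelBiquadratic} for the biquadratic extensions over $K$ and over $F$, write $[B]=[\quat{a,p}\otimes\quat{b,q}]$ plus a class that vanishes over $K$. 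Then, since $\quat{a,p}+\quat{b,q}=\quat{a,pq}+\quat{a+b,q}$, set $Q=\quat{a,pq}$ and $v=q$.

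\textbf{(ii)$\Rightarrow$(i).} Take $Q=\quat{a,w}$ (any quaternion algebra can be absorbed this way, given the previous step; otherwise use the general shape). The algebra $D'=Q$ satisfies $[D'_K]=[\quat{a,w}\otimes\quat{b,1}]$ with $w\in F^\ast$, so $\inv(D')=0$ by \Cref{cor_inv0ifDefOverF}. \Cref{lem_reducing} then gives $\inv(D)=0$.

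\textbf{(ii)$\Leftrightarrow$(iii).} Over $E$ both $\quat{a+b,v}$ and $\quat{d,u}$ (resp.\ $\quat{u,d}$) split. Hence (ii) gives $[D_E]=[Q_E]$, and the division algebra of $D_E$ is $Q_E$ or its division part, which is defined over $F$.

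Conversely, suppose $[D_E]=[A_E]$ for an $F$-algebra $A$. Then $[D]-[A]$ lies in the kernel for $E/F$, which is biquadratic with intermediate fields $F(\delta)$ and $F(\wp^{-1}(a+b))$. By \eqref{BrauerKernelBiquadratic} this gives $[D]=[A]+[\quat{a+b,v}]+[\quat{d,u}]$. Moreover $A$ has index at most 2: indeed $D_E$ is split by $L/E$, which is quadratic, so $\ind(A_E)\le 2$. Here I would argue that $A$ can be chosen to be a quaternion algebra, using $\ind(D_E)\le 2$ and that the $F$-descent of the division algebra of $D_E$ has the same degree. Then $Q=A$ gives (ii).
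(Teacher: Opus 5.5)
Your proof of (ii)$\Leftrightarrow$(iii) is correct and matches the paper. The $F$-form of the division algebra of $D_E$ has the same degree $\le 2$, and \eqref{BrauerKernelBiquadratic} for $E/F$ does the rest. The implications (i)$\Rightarrow$(ii) and (ii)$\Rightarrow$(i), however, each have a gap at exactly their nontrivial step.

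\textbf{(i)$\Rightarrow$(ii).} You get an $F$-Albert form $\psi$ with $\psi_K\cong\varphi$, but you never produce the factor $\quat{a+b,v}$ over $F$. Your fallback does not work. Writing $[B]=[\quat{a,p}\otimes\quat{b,q}]$ plus a class in $\BrTwo(K/F)$ requires $p,q\in F^\ast$, and nothing supplies this:
\begin{itemize}
\item \eqref{BrauerKernelBiquadratic} over $K$ only gives $p,q\in K^\ast$;
\item over $F$ it cannot be applied, since $B$ is only known to split over $L$, not over $F(\wp^{-1}(a),\wp^{-1}(b))$.
\end{itemize}
That $\BrTwo(L/F)$ is not a sum of quadratic kernels is exactly what $\inv$ measures. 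The missing observation is simple. Since $\varphi$ and $[1,a+b]$ are both defined over $F$, so is the complement $x[1,a]\perp y[1,b]$: it is Witt equivalent to $(\psi\perp[1,a+b])_K$, so Witt cancellation and excellence of $K/F$ give $\rho_K$ for a $4$-dimensional $F$-form $\rho$. After adjusting the Arf invariant by $d$ if needed, $\rho\cong c_1[1,e]\perp c_3[1,e+a+b]$ with $c_1,c_3\in F^\ast$ and $e\in F$. Then $C([1,a+b]\perp\rho)$ is Brauer equivalent to $\quat{e,c_1c_3}\otimes\quat{a+b,c_3}$ and agrees with $D$ over $K$. So $[D]-[\quat{e,c_1c_3}]-[\quat{a+b,c_3}]\in\BrTwo(K/F)$, which is (ii); this is the paper's argument.

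\textbf{(ii)$\Rightarrow$(i).} After \Cref{lem_reducing} you must prove $\inv(Q)=0$ for the \emph{given} $Q$, which is an arbitrary $F$-quaternion algebra split by $L$. Replacing it by some $\quat{a,w}$ with $w\in F^\ast$ is unjustified; invoking the previous step is circular, since (i) is what you are proving. For general $Q$ you only know $[Q_K]=[\quat{a,x}\otimes\quat{b,y}]$ with $x,y\in K^\ast$, so \Cref{cor_inv0ifDefOverF} does not apply.

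What you need is an Albert form of that shape which is defined over $F$. Let $\pi$ be the norm form of $Q$ and $\tau$ a $4$-dimensional form Witt equivalent to $\pi\perp[1,a+b]$. Then $\Delta(\tau)=a+b$ and $\tau_L$ is hyperbolic. Hence $\tau_{K(\wp^{-1}(a))}$ is isotropic: otherwise it would be similar to a $2$-fold Pfister form and have trivial Arf invariant, whereas $a+b\equiv b$ is nontrivial there because $[L:F]=8$. It follows that $\tau_K\cong x[1,a]\perp y[1,b]$ with $x,y\in K^\ast$; in the isotropic case use $\H\perp c[1,a+b]\cong c[1,a]\perp c[1,b]$. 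Thus $[1,a+b]\perp\tau$ is an $F$-form with Clifford invariant $[Q]$, and over $K$ it is the Albert form of $\quat{a,x}\otimes\quat{b,y}$. Its transfer therefore vanishes and $\inv(Q)=0$. The paper reaches this conclusion by treating the cases $\tau$ isotropic and $\tau_K$ isotropic separately.
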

\begin{proof}
    \ref{prop_invTrivial1} $\Rightarrow$ \ref{prop_invTrivial2}:
        By \Cref{lem_AlbertDefinedOverF} there are $x, y \in K^\ast$ such that 
        \[[D_K] = [\quat{a, x} \otimes \quat{b, y}].\]
        and the Albert form
        \[\varphi = [1, a + b] \perp x [1, a] \perp y[1, b]\]
        for $\quat{a, x} \otimes \quat{b, y}$ is defined over $F$.
        Since $[1, a + b]$ is defined over $F$, this implies $x [1, a] \perp y[1, b]$ to be defined over $F$.
    
        Hence, by comparing Arf invariants, there are $c_1, c_2, c_3 \in F^\ast$ such that in $\W_q(K)$, we have
        \[{\varphi} = [1, a+b] + [c_1, c_2] + c_3[1, c_1c_2 + a + b] = \qpf{c_3, a+b} + c_1 \qpf{c_1c_3, c_1c_2},\]
        yielding $[D_K] = [\quat{a+b, c_3} \otimes \quat{c_1c_2, c_1c_3}]$.
        We thus have 
        \[[D \otimes \quat{a+b, c_3} \otimes \quat{c_1c_2, c_1c_3}] \in \BrTwo(K/F)\]
        which readily implies
        \[[D] = [\quat{c_1c_2, c_1c_3} \otimes \quat{a+b, c_3} \otimes \widehat Q],\]
        where $\widehat Q$ is split over $K$.
        The assertion now follows by the description of $\BrTwo(K/F)$ given in the introduction.

    \ref{prop_invTrivial2} $\Rightarrow$ \ref{prop_invTrivial1}:
        By \Cref{lem_reducing}, it is enough to consider the case in which $D$ is a quaternion algebra. Let $\pi \in \GP_2(F)$ be its associated norm form.
        Let $\tau$ be a form of dimension 4 that is Witt equivalent to $\pi \perp [1, a + b]$.
        If $\tau$ is isotropic, then $[1, a+b]$ is a subform of $\pi$ and thus
        \[\pi \cong \qpf{c, a + b} = \qpf{c, a} + \qpf{c, b} \]
        for some $c \in F^\ast$.
        This case thus follows from \Cref{cor_inv0ifDefOverF}.
        Let now $\tau$ be anisotropic. 
        We first consider the case in which $\tau_K$ is isotropic.
        \begin{itemize}
            \item Let $K / F$ be separable.
            Then by \cite[Chapter V, (4.2) Theorem]{Baeza_SemilocalRings} and by comparing the Arf invariant, we have $\tau \cong x[1, d] \perp y[1, d + a + b]$ for some $x, y \in F^\ast$.
            In $\W_q(F)$ we then obtain 
            \begin{align*}
                \pi = x[1, d] + y[1, d + a + b] + [1, a+b] = x\qpf{xy, d} + \qpf{y, a} + \qpf{y, b}
            \end{align*}
            so in $\W_q(K)$, we have
            \begin{align*}
                \pi_K = \qpf{y, a} + \qpf{y, b}
            \end{align*}
            and we conclude again by \Cref{cor_inv0ifDefOverF}.
            \item Let $K / F$ be inseparable.
            Then by \cite[Corollary 2.4]{Ahmad94} and by comparing the Arf invariant, we have $\tau \cong x[1, s] \perp xd[1, s + a + b]$ for some $x, y, s \in F^\ast$.
            In $\W_q(K)$, we obtain
            \begin{align*}
                \pi_K &= x[1, s] + xd[1, s + a + b] + [1, a+b] \\
                &=x[1, a + b] + [1, a+b] = \qpf{x, a} + \qpf{x, b}
            \end{align*}
            and the conclusion follows as before by \Cref{cor_inv0ifDefOverF}.
        \end{itemize}
        Let now $\tau_K$ be anisotropic.
        Over $L$, $\pi$ and $\tau$ are Witt equivalent.
        Since $D$ splits over $L$, so does $\pi$ resp. $\tau$.
        Hence, if $\tau_{K(\wp^{-1}(a))}$ were anisotropic, it would be isometric to a two-fold Pfister form and thus of Arf invariant 0 in $K(\wp^{-1}(a))$, contradicting $[L : F] = 8$ and $\Delta(\tau) = a + b$.
        Therefore $\tau_{K(\wp^{-1}(a))}$ is isotropic and it follows $\tau_K \cong x[1, a] \perp y [1, b]$ for some $x, y \in K^\ast$.
        
        Now consider the Albert form $\varphi = \tau \perp [1, a + b]$ and its Clifford algebra $\widehat D$.
        By the above, we have $\widehat D_K = \quat{a, x} + \quat{b, y}$, so its Albert form corresponding to this decomposition is given by 
        \[\varphi_K = [1, a+b] \perp x[1, a] \perp y [1, b] = (\tau \perp [1, a + b])_K.\]
        Since this form is defined over $F$, its transfer vanishes and we obtain $\inv(\widehat D) = 0$.
        
        Since we have
        \[[1, a + b] + \tau = \pi\]
        we have $[D] = [\widehat D]$.
        The conclusion now follows by applying \Cref{lem_reducing} again.

    \ref{prop_invTrivial2} $\Rightarrow$ \ref{prop_invTrivial3}:
        Under the given assumption we have $[D_E] = [Q_E]$ and the conclusion is clear.

    \ref{prop_invTrivial3} $\Rightarrow$ \ref{prop_invTrivial2}:
        First note that since $E \subseteq L$ and $[E : F] = 4$, we have $\ind(D_E) \leq 2$ by \cite[Corollary 4.5.11]{GilleSzamuely_CSAGalCohomology}, so $[D_E] = [Q]$ for some quaternion algebra $Q$.
        If $Q$ can be chosen as a quaternion algebra over $F$, we have
            \[[(D \otimes Q)_E] = 0\]
        and the assertion follows by the description of the kernel in the biquadratic case.
\end{proof}

\begin{corollary} \label{cor_Inv0ImpliesDelta0}
	Let $K/F$ be separable.
	If $\inv(D) = 0$, then $\delta_{K/F}(D_K) = 0$.
\end{corollary}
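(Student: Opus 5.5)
The plan is to go through the characterization in \Cref{prop_invTrivial} and then use the defining property of $\delta_{K/F}$ recalled in the introduction. That property says: for a biquaternion $K$-algebra $B$ with trivial corestriction, $\delta_{K/F}(B)=0$ if and only if $B\cong B'_K$ for some $F$-algebra $B'$. Here $\delta_{K/F}(D_K)$ means $\delta_{K/F}$ of the biquaternion $K$-algebra $B$ Brauer equivalent to $D_K$. Such a $B$ exists by the first part of \Cref{prop_DK_ExistenceInv}. Its corestriction is trivial because $s_\ast(D_K)=0$ by the exactness of \eqref{eq_ComplexExtTransfer}, since $D_K$ comes from $F$. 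So $\delta_{K/F}(B)$ is defined.

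Assume $\inv(D)=0$. Since $K/F$ is separable, \ref{prop_invTrivial1} $\Rightarrow$ \ref{prop_invTrivial2} of \Cref{prop_invTrivial} gives a quaternion $F$-algebra $Q$ and $u,v\in F^\ast$ with
\[[D]=[Q\otimes\quat{a+b,v}\otimes\quat{d,u}].\]
Extending scalars to $K=F(\wp^{-1}(d))$ kills $\quat{d,u}$, so $[D_K]=[(Q\otimes\quat{a+b,v})_K]$. Put $B'=Q\otimes\quat{a+b,v}$, a biquaternion algebra over $F$. Then $B$ and $B'_K$ are both central simple $K$-algebras of degree $4$ in the same Brauer class. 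By Wedderburn's theorem they are isomorphic, so $B$ is defined over $F$ and hence $\delta_{K/F}(D_K)=0$.

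A more hands-on route, matching the parenthetical remark in \Cref{cor_inv0ifDefOverF}, also works. \Cref{lem_AlbertDefinedOverF} produces $x,y$ with Albert form defined over $F$, and the rewriting in the proof of \Cref{prop_invTrivial} shows $[D_K]=[\quat{a+b,c_3}\otimes\quat{c_1c_2,c_1c_3}]$ with all $c_i\in F^\ast$. The Wedderburn step then applies as before.

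I do not expect a serious obstacle. The only point needing care is that $\delta_{K/F}$ is defined on the degree-$4$ representative with trivial corestriction. One must therefore pass from the Brauer-class statement of \Cref{prop_invTrivial} to an isomorphism of biquaternion algebras, and Wedderburn's theorem handles this because the degrees agree.
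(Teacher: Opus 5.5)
Your proof is correct and is essentially the paper's argument: \Cref{prop_invTrivial} shows that the biquaternion algebra Brauer equivalent to $D_K$ is defined over $F$, and the descent criterion \cite[Theorem 3.6]{BarryChapmanLaghribi_DescentBiquaternion} then gives $\delta_{K/F}(D_K)=0$. You also spell out two steps the paper leaves implicit: the trivial-corestriction check, and the Wedderburn step that turns equality of Brauer classes into an isomorphism of degree-$4$ algebras.
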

\begin{proof}
	It is clear that by \Cref{prop_invTrivial} that $\inv(D) = 0$ implies that the biquaternion algebra Brauer equivalent to $D_K$ is defined over $F$ and the conclusion follows from \cite[Theorem 3.6]{BarryChapmanLaghribi_DescentBiquaternion}.
\end{proof}

Alternatively, \Cref{cor_Inv0ImpliesDelta0} could have been directly derived from the definitions of the two invariants. 
In fact, if $\inv(D) = 0$ and $\varphi$ is an Albert form corresponding to $D_K$, by definition, there is some $\gamma \in E^\ast$ with $e^3(\varphi) = \overline{\dlog(\norm_{E/F}(\gamma))} \wedge [D]$.
Since $\norm_{E/F}(\gamma) = \norm_{K/F}(\norm_{E/K}(\gamma))$, we conclude $e^3(\varphi) \in \overline{\dlog(\im(\norm_{K/F})\setminus\{0\})} \wedge [D]$, yielding $\delta_{K/F}(D_K) = 0$.

We continue our discussion on analogous invariants for other types of triquadratic extensions.

\begin{remark}
    In \Cref{rem_OtherExtensions}, we pointed out that the analogous invariant for triquadratic extensions of separability degree $\leq 2$ always vanishes. 
    If $F(\sqrt a, \sqrt b) \subseteq L$, this goes along with \Cref{prop_invTrivial} as we will now see.
    
    We know that the Brauer class of any $D \in \BrTwo(L/F)$ is represented by $\quat{x, a} \otimes \quat{y, b} \otimes Q$, where $Q$ is of the form $\quat{d, z}$ or $\quat{z, d}$, depending on whether $K / F$ is separable or not.
    The formulation in \Cref{prop_invTrivial} can thus be adapted as follows:
    
    In \ref{prop_invTrivial2}, we replace $\quat{a + b, v}$ with $\quat{w, ab}$ and in \ref{prop_invTrivial3} we replace $E = K(\wp^{-1}(a + b))$ with $K(\sqrt{ab})$.
\end{remark}

We conclude this section with an example of an algebra $A$ such that $\inv(A) \neq 0$ but $\delta_{K / F}(A) = 0$, in the case where $K / F$ is a separable quadratic extension, so the reverse implication in \Cref{cor_Inv0ImpliesDelta0} is not true in general.

We will make use of the residue maps constructed in \cite[Section 6]{BarryChapmanLaghribi_DescentBiquaternion}. 
For the reader's convenience, we will recall some basic facts about these residue maps.
Let $\mathcal O$ be a discrete valuation ring with quotient field $M$ and residue field $\overline M$.
Let $\pi$ be a uniformizer.
For $n \geq 1$, let $\I^n_q(M)' := \I^{n-1}(M) \otimes \{[1, a] \mid a \in \mathcal O\}$.
Denoting by $\partial_\pi^1, \partial_\pi^2: \W(M) \to \W(\overline M)$ the usual first and second residue homomorphisms for bilinear forms, we have well defined homomorphisms $\delta_\pi, \Delta_\pi: \W_q(M)' \to \W_q(\overline M)$ with
\[\delta_\pi^2(\bb \otimes [1, a]) = \partial_\pi^2(\bb) \otimes [1, \overline a], \quad \quad \Delta_\pi(\bb \otimes [1, a]) = (\partial_\pi^1 +\partial_\pi^2)(\bb) \otimes [1, \overline a],\]
where $\overline a$ denotes the residue class of $a$.
We refer to $\delta_\pi^2$ as the \emph{second residue homomorphism}.
These maps further induce homomorphisms
\[\delta_\pi^2: \I^n_q(M)' / \I^{n+1}_q(M)' \to \I^{n-1}_q(\overline M) / \I^{n}_q(\overline M)\]
and
\[ \Delta_\pi: \I^n_q(M)' / \I^{n+1}_q(M)' \to \I^{n}_q(\overline M) / \I^{n + 1}_q(\overline M)\]
in the obvious way.

To define residue maps in Kato-Milne cohomology, similarly as before, we define $H_2^{n+1}(M)'$ to be the subgroup $\nu_M(n) \wedge H_2^1(\mathcal O)$ of $H_2^{n+1}(M)$, where $\nu_M(n)$ denotes the kernel of the Artin-Schreier map $\wp: \Omega_M^n \to \Omega_M^n / \diff \Omega_M^{m-1}$.
Since Kato's isomorphisms restrict to isomorphisms $g_n: H_2^n(M)' \to \I^n_q(M)' / \I^{n+1}_q(M)'$, we obtain residue maps in cohomology by defining
\[\xi_\pi:= f^{n-1} \circ \delta^2_\pi \circ g_{n}\]
and
\[\chi_\pi := f^{n} \circ \Delta_\pi \circ g_{n}.\]

\begin{example} \label{ex_NontrivialInv}
    Let $A$ be a central simple algebra of degree 8 and exponent 2 which is indecomposable, i.e. not isomorphic to a tensor product of 3 quaternion algebras, see \cite[Section 3]{Rowen_DivisionAlgebraExp2Char2} for the existence of such algebras in characteristic 2.
    By \cite[Theorem 2]{Rowen_DivisionAlgebraExp2Char2}, there are $a_1, \ldots, a_4, b_1, \ldots, b_4 \in F^\ast$ such that
    \[M_2(A) \cong \bigotimes_{i = 1}^4\quat{a_i, b_i}.\]
    Set $L = F(\wp^{-1}(a_1), \wp^{-1}(a_2), \wp^{-1}(a_3))$ and $E = F(\wp^{-1}(a_1), \wp^{-1}(a_2))$.
    Let $D = \quat{a_4, b_4}$ and let $\varphi = b_4[1, a_4] \perp [1, a_3 + a_4]$.
    By \cite[Example 6.1]{LaghribiMukhija_ExcellenceInsepQuartic}, $(\varphi_E)_\an$ is of dimension 2 and not defined over $F$.
    We put
    \begin{align} \label{eq_anisotropicPartPhi}
        (\varphi_E)_\an \cong s[1, a_3]
    \end{align} for some $s \in E^\ast$.
    We consider the biquaternion algebra $\tilde D = D \otimes \quat{a_3, t}$ over the Laurent extension $F\dbrac t$.
    We show that $\inv(\tilde D) \neq 0$, where we take the invariant with respect to the triple $(a_1, a_3, a_2)$ (note the order!).
    
    Suppose otherwise that $\ind(\tilde D) = 0$.
    Then by \Cref{prop_invTrivial}, there are $c_1, c_2, e, f \in F\dbrac t$ such that
    \begin{align} \label{eq_expressionTildeD}
        [\tilde D] = [D \otimes \quat{a_3, t}] = [Q \otimes \quat{a_1 + a_3, c_1} \otimes \quat{a_2, c_2}],
    \end{align}
    where $Q = \quat{e, f}$ with $e, f \in F\dbrac t$ and $f \neq 0$.
    Using the Clifford invariant, \eqref{eq_expressionTildeD} implies
    \begin{align*}
    	b_4[1, a_4] \perp t[1, a_3] \perp [1, a_3 + a_4] &\perp f[1, e] \perp c_1[1, a_1 + a_3] \perp \\
	c_2[1, a_2] &\perp [1, e + a_1 + a_2 + a_3] \in \I_q^3(F\dbrac{t}).
    \end{align*}
    Extending scalars to $E$, plugging in \eqref{eq_anisotropicPartPhi} and rearraging some terms, we see
    \begin{align} \label{eq_representationInI3E}
        \bif{s, t, c_1, stc_1} \otimes [1, a_3] \perp stc_1[1, a_3] \perp f[1, e] \perp [1, e + a_3] \\
        = s[1, a_3] \perp t[1, a_3] \perp c_1[1, a_3] \perp f[1, e] \perp [1, e + a_3] \in \I_q^3(E\dbrac{t}).\nonumber
    \end{align}
    Since the latter form is of dimension 10, it is Witt equivalent to a scalar multiple of a Pfister form \cite[Proposition 82.1]{ElmanKarpenkoMerkurjev2008}.
    Since the form becomes hyperbolic over the function field of $\pi = \bif{s, t, c_1, stc_1} \otimes [1, a_3] \in \GP_3(E\dbrac{t})$ as can be verified using the representation in \eqref{eq_representationInI3E}, we can more precisely conclude that this form is Witt equivalent to some scalar multiple of $\pi$ over $E$ by \cite[Corollary 23.6]{ElmanKarpenkoMerkurjev2008}.
    From \eqref{eq_representationInI3E}, we therefore obtain that $stc_1[1, a_3] \perp f[1, e] \perp [1, e + a_3]$ is hyperbolic over $E\dbrac t$, which by considering its Clifford algebra implies
    \[\quat{e, f} \cong \quat{a_3, stc_1},\]
    hence
    \begin{align*}
    	\qpf{f, e} \cong \qpf{stc_1, a_3}.
	\end{align*}
    We will consider the second residue homomorphism of these forms and will distinguish between the cases in which $c_1$ is a unit with respect to the $t$-adic valuation and in which $c_1$ is not a unit.
    \begin{itemize}
        \item $c_1$ unit:
            In this case, we obtain
            \begin{align*}
                \delta_t^2(\qpf{stc_1, a_3}) = s\overline{c_1}[1, a_3].
            \end{align*}
            We compare the Arf invariants and use that $\qpf{e, f}$ is defined over $F\dbrac t$, implying that its residue form represents an element in $F$, and thus we have 
            \[\delta_t^2(\qpf{f, e}) = p[1, a_3]\]
            for some $p \in F^\ast$.
            This now implies
            \[(\varphi_E)_\an \cong p\overline {c_1}[1, a_3],\]
            which is a contradiction to the fact that $(\varphi_E)_\an$ is not defined over $F$.
        \item $c_1$ not unit: 
            In this case, modulo a square, we may assume $c_1 = tu$ for a unit $u$.
            We apply the residue homomorphism to \eqref{eq_expressionTildeD} and obtain 
            \begin{align*}
                \xi_t(\quat{a_1+ a_3, c_1}) &= \xi_t(\quat{a_1+ a_3, tu}) \\
                &= \xi_t(\quat{a_1+ a_3, u}) + \xi_t(\quat{a_1+ a_3, t})\\
                &= 0 + a_1 + a_3 = a_1 + a_3.
            \end{align*}
            Similarly, we have $\delta := \xi_t(\quat{a_2, c_2}) \in \{0, a_2\}$, depending on whether $c_2$ is a unit or not.
            Using \eqref{eq_expressionTildeD} and applying the second residue map again to both sides, we thus have
            \[a_3 = a_1 + a_3 + \delta,\]
            but this contradicts the fact that $[L : F] = 8$.
    \end{itemize}
    
    So we have finally shown $\inv(\tilde D) \neq 0$ with respect to the triple $(a_1, a_3, a_2)$, while clearly $\delta_{F(\wp^{-1}(a_2)) / F}(\tilde D) = 0$ since $\tilde D$ is defined over $F$.
\end{example}

\section{An application to Chow Groups and indecomposable algebras of degree $8$} \label{sec_ChowGroups}

In \cite{Sivatski_CohoInv}, Sivatski constructed an indecomposable central simple algebra $A$ of degree 8 and exponent 2 over a field $K$ of cohomological dimension 3 such that the torsion of the second Chow group of its Severi-Brauer variety is not trivial.
Recall that by \cite[Theorem A.1]{IzhboldinKarpenko_GenericSplittingFields} and \cite[Proposition 5.1]{Karpenko_Codim2Cycles}, we have
\begin{align}\label{eq_IsoTorsChow}
	\frac{\ker(H^3(K) \to H^3(K(\SevBrauer(A))))}{A \cup F^*/F^{*2}} \cong \torsChow(\SevBrauer(A)) \cong \Z/2\Z.
\end{align}

Recall that the second isomorphism was done by Karpenko in any characteristic. 
For the first isomorphism in the setting of Kato-Milne cohomology, we have an injection
\begin{equation}\label{isocohom}
	\frac{\ker(H_2^3(K) \to H_2^3(K(\SevBrauer(A))))}{A \wedge \dlog F^*} \hookrightarrow \torsChow(\SevBrauer(A)).
\end{equation}
In fact, we consider the cohomology groups $H^{m}(F, \Q_2/\Z_2(m-1))$ and\break $H^{m}(F, \Z/2\Z(m-1))$ as defined in \cite[Appendix A]{GaribaldiMerkurjevSerre_CohInvariants}. 
The first group has the second group as $2$-torsion, and it is a subgroup of $H^{m}(F, \Q/\Z(m-1))$. 
Moreover, $H^{m}(F, \Z/2\Z(m-1))=H^1(F, K_{m-1}(F_{sep})/2K_{m-1}(F_{sep}))$, where $K_n(F_{sep})$ is the $n$-th Milnor group of the separable closure $F_{sep}$ of $F$ \cite[Equation (A.4)]{GaribaldiMerkurjevSerre_CohInvariants}. 
One knows that $H^1(F, K_{m-1}(F_{sep})/2K_{m-1}(F_{sep}))\cong H_2^{m}(F)$ (we combine \cite{Kat1} with \cite[(5.12)]{MR1154404}.

Now the injection (\ref{isocohom}) follows since the Kato-Milne cohomology group $H^m_2(F)$ injects into $H^m(F, \Q/\Z(m-1))$, and we have a result due to Peyre \cite[Theorem 4.1]{PeyreProductsSeveriBrauer} that asserts the following isomorphism in any characteristic
\begin{align*}
	\frac{\ker(H^3(K, \Q/\Z(2)) \to H^3(K(\SevBrauer(A)), \Q/\Z(2))}{A \cup F^*/F^{*2}} \cong \torsChow(\SevBrauer(A)).
\end{align*}

Now we produce an analogue of the contruction done by Sivatski in \cite[Section 4]{Sivatski_CohoInv}. 
As mentioned in \Cref{ex_NontrivialInv}, there exists an indecomposable algebra $D$ of degree $8$ and exponent $2$, a separable triquadratic extension $K=F(\wp^{-1}(a_1), \wp^{-1}(a_2), \wp^{-1}(a_3))$ such that $D_K$ is split but $D$ is not isomorphic to $\otimes_{i = 1}^3 [a_i, x_i)$ for any $x_1, x_2, x_3 \in F^\ast$. 
Moreover, we have $M_2(D) \cong \otimes_{i = 1}^4 [a_i, b_i)$ for suitable $a_4, b_1,b_2, b_3, b_4\in F^\ast$.

Let $E = F \dbrac{t_1} \dbrac{t_2} \dbrac{t_3}$ be the field of iterated Laurent series in the independent variables $t_1, t_2, t_3$. 
Let $C = D + [a_1, t_1) + [a_2, t_2) + [a_3,t_3) \in \BrTwo(E)$, and $A$ the division algebra Brauer equivalent to $C$. 
We have the following:
\begin{lemma} \label{lem_AindexIndec} 
	The algebra $A$ is of index $8$ and indecomposable. 
\end{lemma}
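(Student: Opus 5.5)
Two things must be shown: $\ind(A)=8$, and $A$ does not decompose as a tensor product of quaternion algebras. The main tool is the iterated residue (valuation) theory over $E=F\dbrac{t_1}\dbrac{t_2}\dbrac{t_3}$, working one variable at a time.

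\emph{Index.} The classes $[a_i,t_i)$ are tame "ramified" symbols, and $D$ is inertial (defined over $F$). Over complete discretely valued fields of characteristic $2$ with residue field $F$, an index formula holds: the index of $D\otimes[a_1,t_1)\otimes[a_2,t_2)\otimes[a_3,t_3)$ equals $8\cdot\ind(D_{F(\wp^{-1}(a_1),\wp^{-1}(a_2),\wp^{-1}(a_3))})$. This uses that $F(\wp^{-1}(a_1),\wp^{-1}(a_2),\wp^{-1}(a_3))/F$ has degree $8$, which holds since $[L:F]=8$ in the preceding setup. To establish it we peel off one Laurent variable at a time. Over $M\dbrac t$, with $c\in\mathrm{Br}_2(M)$ and $a\notin\wp(M)$, one has $\ind(c+[a,t))=2\,\ind(c_{M(\wp^{-1}(a))})$. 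This can be proved by computing, via the residue maps $\xi_t$ and $\chi_t$ recalled before \Cref{ex_NontrivialInv}, the Albert-type or Clifford invariant of the quadratic forms involved, or by invoking the standard Witt-type decomposition $\Br(M\dbrac t)\supseteq\Br(M)\oplus H^1_2(M)$ and Springer's theorem for anisotropy of forms over $M\dbrac t$. Since $D$ splits over $F(\wp^{-1}(a_1),\wp^{-1}(a_2),\wp^{-1}(a_3))$, we get $\ind(A)=8$.

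\emph{Indecomposability.} Since $A$ has degree $8$ and exponent $2$, any decomposition would contain a quaternion factor; write $A\cong Q\otimes B$ with $Q$ a quaternion algebra and $B$ biquaternion. It then suffices to show that $A$ contains no quaternion subalgebra. The plan is to descend a decomposition to the residue field and contradict the choice of $D$. Using the valuation theory of division algebras over Henselian fields, $A$ is a tame semiramified-plus-inertial algebra whose residue algebra is $D_{F(\wp^{-1}(a_1),\wp^{-1}(a_2),\wp^{-1}(a_3))}$, which is split, and whose residue field is the triquadratic field. Equivalently, $A$ is a tensor product of the semiramified symbols, twisted by $D$. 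We then imitate Sivatski's argument in \cite[Section 4]{Sivatski_CohoInv}. A quaternion subalgebra $Q\subseteq A$ splits into valuation data: its residue field contributes a quadratic subextension $F(\wp^{-1}(\sum\epsilon_ia_i))$ and its value group a $t$-monomial. By applying residue maps successively in $t_3,t_2,t_1$, we show that $[A]$ would then be expressible as $\otimes_{i}[a_i,x_i)$-type classes modulo $[a_i,t_i)$. This forces $D\cong\otimes_{i=1}^3[a_i,x_i)$ for some $x_i\in F^\ast$, contradicting the choice of $D$.

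The main obstacle is this descent step: controlling which quaternion subalgebras can occur, in particular those whose residue field is a "mixed" quadratic extension such as $F(\wp^{-1}(a_1+a_2))$. The first attempt is a case analysis, as in \Cref{ex_NontrivialInv}, on the valuation of the slot entries: apply $\xi_{t_j}$ to both sides of $[A]=[Q]+[B]$, compare residues in $H_2^1(F)$, and use that $a_1,a_2,a_3$ are independent modulo $\wp(F)$ to pin down the shape of $Q$ and $B$. The reduction to $D$ is then completed using \Cref{prop_invTrivial}, applied to the relevant biquadratic subextension.
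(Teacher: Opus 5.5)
\textbf{The index part works; the indecomposability part has a genuine gap.}

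Your index argument is acceptable. Iterating the index formula $\ind(c+[a,t))=2\,\ind(c_{M(\wp^{-1}(a))})$ for $c\in\BrTwo(M)$ over $M\dbrac{t}$ does give $\ind(A)=8$, though it uses more machinery than necessary.

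The gap sits exactly at what you call the main obstacle. Everything rests on the claim that a quaternion subalgebra $Q\subseteq A$ (equivalently, a factor of a decomposition $A\cong Q\otimes B$) is semiramified, with residue field some $F(\wp^{-1}(\sum\epsilon_i a_i))$ and a monomial $t_1^{m_1}t_2^{m_2}t_3^{m_3}$ in its second slot. You give no proof of this. A factor of an arbitrary decomposition is a priori just some $[\alpha,\beta)$ over $E$, with no control on $\alpha$.

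This matters for the tools you propose:
\begin{itemize}
\item The residue maps $\xi_\pi,\chi_\pi$ recalled before \Cref{ex_NontrivialInv} are only defined on the subgroup $H_2^2(M)'=\nu_M(1)\wedge H_2^1(\mathcal O)$ of classes with integral Artin--Schreier slot. So applying $\xi_{t_j}$ separately to $[Q]$ and $[B]$ is not legitimate without that control.
\item In \Cref{ex_NontrivialInv} the case analysis could start only because \Cref{prop_invTrivial} had already supplied a decomposition with first slots $a_1+a_3, a_2\in F$.
\item Here nothing gives you the vanishing of $\inv$ for any algebra, so \Cref{prop_invTrivial} cannot be invoked either.
\end{itemize}
If every factor were known to be of the form $[c_j,\, t_1^{m_{j1}}t_2^{m_{j2}}t_3^{m_{j3}}f_j)$ with $c_j,f_j\in F$, then comparing with $[D]+\sum_i[a_i,t_i)$ would give $[D]=\sum_j[c_j,f_j)$ and finish the proof. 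But establishing that description is the entire difficulty, and your proposal does not do it.

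\textbf{The missing idea} is to remove the ramification by a base change, instead of analysing subalgebras of the ramified algebra $A$.
\begin{itemize}
\item Let $E'=E(\sqrt{t_1},\sqrt{t_2},\sqrt{t_3})$. This is again an iterated Laurent series field, $F\dbrac{\sqrt{t_1}}\dbrac{\sqrt{t_2}}\dbrac{\sqrt{t_3}}$.
\item Over $E'$ every $[a_i,t_i)$ splits, so $A_{E'}\sim D_{E'}$.
\item Hence $\ind(A)\ge \ind(D_{E'})=\ind(D)=8$, since $D$ is division, being indecomposable of degree $8$.
\item The upper bound comes from $A$ splitting over $E(\wp^{-1}(a_1),\wp^{-1}(a_2),\wp^{-1}(a_3))$. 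So $\ind(A)=8$ and $A_{E'}\cong D_{E'}$.
\item A decomposition of $A$ therefore yields a decomposition of $D_{E'}$.
\item Decomposability descends from Laurent series extensions to the base by \cite[Lemma 7]{MorandiSethuran_IndecDivisionAlgBaerOrdering}. This contradicts the indecomposability of $D$.
\end{itemize}
This is the paper's argument. It needs no residue computations, and it never has to control the shape of the quaternion factors of $A$.
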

\begin{proof}
	Since $A$ is split over $E \cdot K$, it follows that $A$ has index at most $8$. 
	Moreover, extending scalars to $L:=E(\sqrt{t_1}, \sqrt{t_2}, \sqrt{t_3})$ yields $A_L \sim D_L$. 
	Hence, $A$ is of index $8$ and indecomposable, since otherwise, $D$ would be decomposable over $K$ which is not possible by \cite[Lemma 7]{MorandiSethuran_IndecDivisionAlgBaerOrdering}.
\end{proof}

Now since $D_K$ is split, there exist $\alpha, \beta \in M := F(\wp^{-1}(a_3))$ such that $D_M = [a_1,\alpha) + [a_2, \beta)$. 
Hence, $C_{E(\wp^{-1}(a_3))} = [a_1, t_1\alpha) + [a_2, t_2\beta)$. 
Let $\phi = t_1 \alpha [1, a_1] \perp t_2 \beta [1, a_2] \perp [1, a_1+a_2]$ be the Albert form associated to $C_{E(\wp^{-1}(a_3))}$. 
Similarly as in \eqref{eq_TrVarphiInI3}, we have 
\[e^3(\trace_\ast(\phi))=a_1\dlog(yvt_1t_2)\wedge \dlog(\norm_{M/F}(\alpha)),\]
where $\alpha = x + y \wp^{-1}(a_3)$ and $\beta = u + v \wp^{-1}(a_3)$ and the trace with respect to the field extension $E(\wp^{-1}(a_3)) / E$.

Let $E(\SevBrauer(C))$ be the function field of the Severi-Brauer variety of $A$. 
As was shown by Sivatski in \cite[Proposition 4.1]{Sivatski_CohoInv}, we have the following result.

\begin{proposition}\label{PropChow}
	The symbol $a_1\dlog(yvt_1t_2) \wedge \dlog(\norm_{M/F}(\alpha))$ is zero over $E(\SevBrauer(C))$, but $a_1\dlog(yvt_1t_2) \wedge \dlog(\norm_{M/F}(\alpha)) \neq [C] \wedge \dlog(f)$ for any $f\in E^\ast$.
\end{proposition}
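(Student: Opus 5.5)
The two assertions need different tools. For the vanishing over $E(\SevBrauer(C))$ I would use the well-definedness part of \Cref{prop_DK_ExistenceInv}. For the non-representability I would use iterated residue maps $\xi_{t_i}$ together with the indecomposability of $D$.

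\emph{Vanishing.} Put $K' := E(\wp^{-1}(a_3))$. I would apply \Cref{prop_DK_ExistenceInv} to $C$ over $E$, with the quadratic extension $K'/E$ and the pair $(a_1, a_2)$; note $C$ splits over $K'(\wp^{-1}(a_1),\wp^{-1}(a_2))$. With the decomposition $[C_{K'}] = [\quat{a_1, t_1\alpha}\otimes\quat{a_2,t_2\beta}]$, the invariant is represented by $e^3(\trace_\ast(\phi)) = a_1\dlog(yvt_1t_2)\wedge\dlog(\norm_{M/F}(\alpha))$. This computation mirrors \eqref{eq_TrVarphiInI3}: the two norm forms are isometric, so $\norm_{K'/E}(t_1\alpha)$ and $\norm_{K'/E}(t_2\beta)$ lie in the same class modulo norms from $E(\wp^{-1}(a_1))$. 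Now let $\Omega := E(\SevBrauer(C))$. The whole construction commutes with scalar extension to $\Omega$, and $C_\Omega$ is split. So over $K'\Omega$ we may also use the decomposition $x' = y' = 1$, whose Albert form is defined over $\Omega$ and has zero transfer. By the independence statement of \Cref{prop_DK_ExistenceInv}, the two representatives differ by an element of $N\wedge[C_\Omega] = 0$. Hence the symbol vanishes over $\Omega$.

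\emph{Non-representability.} Suppose $\omega := a_1\dlog(yvt_1t_2)\wedge\dlog(\norm_{M/F}(\alpha)) = [C]\wedge\dlog f$ for some $f\in E^\ast$. Write $f = t_1^{i}t_2^{j}t_3^{k}u$ with $u$ a unit for the iterated valuation, and write $[C] = [C_0] + [\quat{a_3,t_3}]$ with $C_0 = D + [a_1,t_1) + [a_2,t_2)$ over $E_2 := F\dbrac{t_1}\dbrac{t_2}$.

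First I apply $\xi_{t_3}$. The left side contains no $t_3$, so its residue is $0$. The right side has residue $k[C_0] + \overline{a_3\dlog \overline u}$. If $k$ were odd, then $[C_0]$ would equal a single quaternion class $[\quat{a_3,\overline u}]$, contradicting $\ind(C_0)=8$, which follows exactly as in \Cref{lem_AindexIndec}. So $k$ is even, and we may take $k=0$ after changing $f$ by a square. The residue equation then also gives $[\quat{a_3,\overline u}]=0$. Applying $\chi_{t_3}$ (specialisation) next reduces the identity to one over $E_2$.

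I then repeat the procedure with $\xi_{t_2}$ and $\xi_{t_1}$. At these steps the left side contributes $a_1\dlog(\norm_{M/F}(\alpha))$ (up to the $yv$-term), and $[C_0]$ contributes $[\quat{a_2,t_2})$ and $[\quat{a_1,t_1})$. Bookkeeping the parities of $i$ and $j$ and the residual units should yield, over $F$, a relation expressing $[D]$ as $[\quat{a_1,x_1}\otimes\quat{a_2,x_2}\otimes\quat{a_3,x_3}]$ with $x_i\in F^\ast$. By degree reasons this forces $D\cong\bigotimes_{i=1}^{3}\quat{a_i,x_i}$, contradicting the choice of $D$ (cf.\ \cite[Lemma 7]{MorandiSethuran_IndecDivisionAlgBaerOrdering}).

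The main obstacle is this last stage. One must track the mixed terms $a_1\dlog(yv)\wedge\dlog\norm_{M/F}(\alpha)$ through the $t_1$- and $t_2$-residues, and show that the residual equation over $F$ really yields such a decomposition rather than something weaker. There I would use that $D_M = [a_1,\alpha)+[a_2,\beta)$ to translate $\dlog\norm_{M/F}(\alpha)$ back into Brauer classes, and use \eqref{BrauerKernelBiquadratic} to pin down the quaternion factors.
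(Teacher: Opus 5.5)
Your vanishing argument is correct, though roundabout. The paper simply notes that $C$ splits over $E(\SevBrauer(C))$, so the Albert form $\phi$, and hence $\trace_\ast(\phi)$, becomes hyperbolic there. Your $t_3$-step is exactly the paper's first step: the $t_3$-valuation of $f$ must be even because $C_0$ has index $8$, and then $[\quat{a_3,\overline f}]=0$. (Small slip: the residue of $[\quat{a_3,t_3}]\wedge\dlog f$ is $[\quat{a_3,\overline{t_1^it_2^ju}}]$, not $[\quat{a_3,\overline u}]$. This is harmless.)

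The genuine gap is the last stage, which you only assert ``should yield'' a decomposition of $D$. The missing idea is a descent theorem, and with it nothing beyond the $t_3$-step is needed. Since $[\quat{a_3,\overline f}]=0$, $f$ is a norm from $E(\wp^{-1}(a_3))$, so $\omega=e^3(\trace_\ast(\phi))$ lies in $[C]\wedge\dlog(\im\norm_{E(\wp^{-1}(a_3))/E})$, i.e.\ $\delta_{E(\wp^{-1}(a_3))/E}(C)=0$. By \cite[Theorem 3.6]{BarryChapmanLaghribi_DescentBiquaternion}, the biquaternion algebra Brauer equivalent to $C_{E(\wp^{-1}(a_3))}$ then descends to a biquaternion $E$-algebra $B$. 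Hence $[C]=[B]+[\quat{a_3,h}]$, and $A$, having index $8$, decomposes, contradicting \Cref{lem_AindexIndec}.

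Your extra residues at $t_2,t_1$ do not circumvent this. If you carry them out, the exponents $i,j$ are forced to be even, and after specialising you are left over $F$ with $a_1\dlog(yv)\wedge\dlog(\norm_{M/F}(\alpha))=[D]\wedge\dlog g_0$, where $g_0\in\norm_{M/F}(M^\ast)$. The left side is $e^3$ of the transfer of the Albert form of $D_M$, so this is just the statement $\delta_{M/F}(D_M)=0$ again. Turning an $H_2^3$-identity of this kind into a decomposition requires passing back to quadratic forms:
\begin{itemize}
\item the transfer of a suitably scaled Albert form lies in $\GP_3(F)$ with trivial $e^3$;
\item the Arason--Pfister Hauptsatz then makes it hyperbolic;
\item exactness of $\W_q(F)\to\W_q(M)\to\W_q(F)$ makes that scaled Albert form defined over $F$.
\end{itemize}
That is precisely the content of the cited theorem. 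Manipulating Brauer classes with $D_M=[a_1,\alpha)+[a_2,\beta)$ and \eqref{BrauerKernelBiquadratic} cannot supply it.

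Note also that such a descent only gives $D\sim B\otimes\quat{a_3,x_3}$ with $B$ some biquaternion $F$-algebra, not a decomposition with slots $a_1,a_2,a_3$. That suffices anyway, since any decomposition contradicts the indecomposability of $D$.
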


\begin{proof}
	We follow the proof done by Sivatski doing some adaptations to our setting since some arguments used by Sivatski do not apply in 	characteristic $2$. 
	First, the symbol $a_1 \dlog(yvt_1t_2) \wedge \dlog(\norm_{M/F}(\alpha))$ vanishes over $E(\SevBrauer(C))$ since the Albert form $\phi$ is hyperbolic over $E(\SevBrauer(C))$.

	Now suppose there  exists $f\in E^\ast$ such that
	\begin{align}\label{eqappl1}
		a_1\dlog(yvt_1t_2)\wedge \dlog(\norm_{M/F}(\alpha))= [C]\wedge \dlog(f).
	\end{align}
	Without loss of generality, we may suppose $f\in F \dbrack{t_1}\dbrack{t_2}\dbrack{t_3}$ square free. 
	As was done by Sivatski, $f$ is a unit with respect to the $t_3$-adic valuation of $E$. 
	Now based on the Techm\"uller lifting with respect to the $t_3$-adic valuation of $E$ combined with the results \cite[Theorem A.19, Corollary A.21]{LaghribiMaitiRationalFunctionFields}, we may identify $f$ with its residue class, and thus we may suppose that $f\in F[[t_1]][[t_2]]$. 
	Applying the residue map $\delta_{t_3}^2$ to equation \eqref{eqappl1} yields
	\[[[a_3, f)]=\delta_{t_3}^2([C]\wedge \dlog(f))=\delta_{t_3}^2(a_1\dlog(yvt_1t_2)\wedge \dlog(\norm_{M/F}(\alpha)))=0.\]
	Consequently, $f\in \im(\norm_{F(\wp^{-1}(a_3))/F})$. 
	Taking the $\delta$-invariant, we get
	\[\delta_{E(\wp^{-1}(a_3))/E}(C)=\overline{a_1\dlog(yvt_1t_2)\wedge \dlog(\norm_{M/F}(\alpha))}=\overline{[C]\wedge \dlog(f)}=0.\]
	It follows from \cite[Theorem 3.6]{BarryChapmanLaghribi_DescentBiquaternion} that $C$ is defined over $E$, and thus the algebra $A$ is decomposable, a contradiction.
\end{proof}

Now we are able to prove the analogue of \cite[Theorem 4.2]{Sivatski_CohoInv}.
\begin{theorem}
	Let $a_1, a_2, a_3\in F$ be such that $[F(\wp^{-1}(a_1), \wp^{-1}(a_2), \wp^{-1}(a_3)):F]=8$ and $D$ as fixed at the beginning of this section. 
	Then, there exists an extension $K/F$ and a central simple $K$-algebra $B$ such that the following conditions are satisfied:
	\begin{enumerate}[label=(\arabic*)]
		\item ${\rm cd}_2(K)=3$.
		\item $K$ has no odd degree extension.
		\item $B$ has index $8$ and splits over $K(\wp^{-1}(a_1), \wp^{-1}(a_2), \wp^{-1}(a_3))$.
		\item $\torsChow(\SevBrauer(B)) \cong \Z/2\Z$. 
			In particular, $B$ is indecomposable.
		\item The nonzero element of $\torsChow(\SevBrauer(B))$ is given by a symbol in $H_2^3(K)$ that belongs to the intersection
		\[\left(a_1\dlog(\norm_F(a_3))\wedge \dlog(K)\right) \cap \left(a_2\dlog(\norm_F(a_3))\wedge \dlog(K)\right),\]
		where $\norm_F(a_3) = \im(\norm_{F(\wp^{-1}(a_3)}) \setminus \{0\}$.
	\end{enumerate}
\end{theorem}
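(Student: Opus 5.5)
This follows Sivatski's proof of \cite[Theorem 4.2]{Sivatski_CohoInv}, applied to the algebra $C$ over $E = F\dbrac{t_1}\dbrac{t_2}\dbrac{t_3}$ and its division algebra $A$. The aim is to kill the symbol $a_1\dlog(yvt_1t_2)\wedge\dlog(\norm_{M/F}(\alpha))$ everywhere except where \Cref{PropChow} forbids it.

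\emph{Step 1: construct $K$.} Start from $E$ and build a tower $E = K_0 \subseteq K_1 \subseteq \cdots$, taking $K = \bigcup_i K_i$. At each stage, first pass to a maximal odd-degree (prime-to-$2$) extension. Then adjoin function fields of all quadratic forms in $\I_q^4$, or more precisely of all $4$-fold quadratic Pfister forms, together with generic splitting fields of all elements of $H_2^3$ that do not lie in the kernel we want to preserve. Concretely, at each step take the composite of function fields $K_i(\pi)$ of all anisotropic $3$-fold Pfister forms $\pi$ over $K_i$ that are \emph{not} killed by $K_i(\SevBrauer(C))$ in the Sivatski fashion, and of all $4$-fold Pfister forms. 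By the Kato isomorphism $f^n$ and the Arason--Pfister Hauptsatz this forces $H_2^4(K)=0$, hence ${\rm cd}_2(K)\le 3$ in the Kato--Milne sense. Odd-degree closure gives (2).

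\emph{Step 2: preserve the properties of $C$.} Put $B = A_K$. We need $\ind(B)=8$, and we need the symbol $\xi = a_1\dlog(yvt_1t_2)\wedge\dlog(\norm_{M/F}(\alpha))$ to stay nonzero modulo $[B]\wedge\dlog K^\ast$. The index is preserved under odd-degree extensions and under function fields of Pfister forms of dimension $\ge 16$ (Merkurjev-type index reduction; in characteristic $2$ use the known analogues). The delicate point is $\xi$. Following Sivatski, only adjoin function fields of forms that vanish in $H_2^3$ over $K_i(\SevBrauer(C))$ only when they are already in $[C]\wedge\dlog$. Alternatively, argue via \eqref{isocohom} and Karpenko's isomorphism $\torsChow(\SevBrauer(A))\cong\Z/2\Z$: the torsion Chow group does not grow under such field extensions, and the class of $\xi$ remains nonzero because \Cref{PropChow} holds at every finite stage, by a limit argument. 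Indecomposability then follows from (4), since decomposable algebras of degree $8$ have trivial torsion in $\mathrm{CH}^2$ (Karpenko). Nonvanishing of $\xi$ also gives ${\rm cd}_2(K)=3$ exactly, which settles (1). Splitting over $K(\wp^{-1}(a_1),\wp^{-1}(a_2),\wp^{-1}(a_3))$ in (3) is inherited from $C$.

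\emph{Step 3: the symbol (5).} Over $E(\wp^{-1}(a_3))$ both Albert forms, built from $(a_1,\alpha)$ and symmetrically from $(a_2,\beta)$, have transfers in $\GP_3$. By \eqref{eq_TrVarphiInI3}, with the role of $a_1$ and $a_2$ swapped through \eqref{eq_EqualityOfPfisterforms}, the same class $e^3(\trace_\ast\phi)$ equals both $a_1\dlog(\cdot)\wedge\dlog\norm_{M/F}(\alpha)$ and $a_2\dlog(\cdot)\wedge\dlog\norm_{M/F}(\beta)$. Here \eqref{eq_EqualityOfPfisterforms} gives $\qpf{\norm(\alpha),a_1}\cong\qpf{\norm(\beta),a_2}$. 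This puts $\xi$ in the stated intersection. By \eqref{isocohom} combined with Step 2, $\xi$ is the nonzero element.

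I expect Step 2 to be the main obstacle: ensuring that the tower killing $H_2^4$ neither reduces the index nor kills $\xi$ modulo $[C]\wedge\dlog$. I would first try Sivatski's argument verbatim, replacing Galois cohomology by Kato--Milne cohomology and using \Cref{PropChow} at each finite stage.
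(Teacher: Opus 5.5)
Your outline follows the paper. \Cref{PropChow} gives a nonzero class $\overline{\xi}$ over $E$. You then pass to a 2-special extension of cohomological 2-dimension $3$ on which the torsion class survives. Finally, (5) comes from $\quat{a_1,\norm_{M/F}(\alpha)}\cong\quat{a_2,\norm_{M/F}(\beta)}$. Your Step 3 is fine: this is \eqref{eq_EqualityOfPfisterforms}, and the printed proof writes $\alpha$ on both sides, evidently meaning $\beta$.

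The gap is Step 2. The paper does not prove this step by hand. It takes it from \cite[Theorem 4.1]{BarryChapmanLaghribi_DescentBiquaternion}, which gives an extension $K/E$ with $\mathrm{cd}_2(K)=3$, $K$ 2-special, and $\torsChow(\SevBrauer(A_K))\neq 0$. Your justification, that ``\Cref{PropChow} holds at every finite stage'', has no basis. Its proof depends on the specific shape $E=F\dbrac{t_1}\dbrac{t_2}\dbrac{t_3}$:
the $t_3$-adic residue map;
Teichm\"uller lifting to move $f$ into $F[[t_1]][[t_2]]$;
the invariant $\delta_{E(\wp^{-1}(a_3))/E}$;
the indecomposability of $A$ from \Cref{lem_AindexIndec}, which is proved by passing to $E(\sqrt{t_1},\sqrt{t_2},\sqrt{t_3})$.
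None of this is available over your intermediate fields $K_i$. So ``\Cref{PropChow} over $K_i$'' is exactly the statement to be proved. The limit argument is harmless only once that is known at each stage.

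What you need is that no step of the tower kills $\overline{\xi}$ modulo $[C]\wedge\dlog$. For odd-degree steps this follows by corestriction: if $\xi=[C]\wedge\dlog g$ over $K'$ with $[K':K_i]$ odd, corestriction gives $\xi=[C]\wedge\dlog\norm_{K'/K_i}(g)$ over $K_i$. For the function field $K_i(\rho)$ of an anisotropic $4$-fold Pfister form, injectivity of $H_2^3(K_i)\to H_2^3(K_i(\rho))$ is not enough. You must also rule out new relations $\xi=[C]\wedge\dlog g$ with $g\in K_i(\rho)^\ast$ not coming from $K_i^\ast$. Nothing in your sketch or in the paper does this. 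It needs a genuine Chow-theoretic input showing that the nonzero element of $\torsChow$ is not killed, and that is what the cited theorem supplies. Your remark that the torsion Chow group ``does not grow'' points the wrong way: the issue is that it must not die.

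Three smaller points:
Adjoining only forms that behave well relative to $\SevBrauer(C)$ is not an option for $4$-fold Pfister forms, since all of them must be killed to get $H_2^4=0$.
The extra function fields of $3$-fold Pfister forms are unnecessary and only add steps with the same survival problem.
$\mathrm{cd}_2(K)\le 3$ requires $H_2^4(K')=0$ for all finite $K'/K$, not just for $K$. You would still have to derive this from 2-speciality and the exact sequences for separable and inseparable quadratic extensions.
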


\begin{proof}
	We have seen in \Cref{PropChow} that the symbol $\pi:=a_1\dlog(yvt_1t_2)\wedge \dlog(\norm_{M/F}(\alpha))$ satisfies 
	\[\displaystyle{0\neq  \overline{\pi}\in \frac{{\rm Ker}(H^3_2(E)\longrightarrow H^3_2({E(\SevBrauer(C))}))}{[C]\wedge \dlog(E^*)}},\]
	where $C =  D + [a_1, t_1) + [a_2, t_2) + [a_3,t_3) \in \BrTwo(E)$ as above.
	Hence, by \eqref{eq_IsoTorsChow} $\torsChow(\SevBrauer(A))\neq 0$, where $A$ is the division algebra Brauer equivalent to $C$. 
	This algebra $A$ is of index $8$ and exponent $2$ by \Cref{lem_AindexIndec}. 
	Moreover, $\pi$ lies in the intersection because $[a_1, \norm_{F(\wp^{-1}(a_3))/F}(\alpha)) = [a_2, \norm_{F(\wp^{-1}(a_3))/F}(\alpha))$.

	By \cite[Theorem 4.1]{BarryChapmanLaghribi_DescentBiquaternion} there exists an extension $K/E$ such that ${\rm cd}_2(K)=3$ and $\torsChow(\SevBrauer(A_K))\neq 0$. 
	In particular, $\torsChow(\SevBrauer(A_K))\cong \Z/2\Z$ and $A_K$ is indecomposable by \cite[Proposition 5.3]{Karpenko_Codim2Cycles}.
	Moreover, it is shown that $K$ has no odd degree extension since it is $2$-special. 
	Obviously, $A_K$ is split over $K(\wp^{-1}(a_1), \wp^{-1}(a_2), \wp^{-1}(a_3))$.
\end{proof}

%
%

\section{Descent Results} \label{sec_Descent}

\subsection*{Descent for Algebras} 

After having constructed the invariant $\inv$ and discussed how it can be used to detect decomposibility of central simple algebras of exponent 2 that split over a triquadratic extension, we will now turn to the descent problem mentioned in the introduction.
More precisely we will cover how the descent can be realized along a field extension of odd degree in certain cases. 
In fact, we will cover the cases of descending quaternion algebras along mixed biquadratic extensions and descending biquaternion algebras along arbitrary quadratic extensions.

\begin{theorem} \label{thm_DescentAlgebras}
	Let $b \in F\setminus \wp(F)$ and $d \in F\setminus F^2$.
    Let $D \in \BrTwo(F)$ and $M/F$ be an odd degree extension such that $[D_{M(\wp^{-1}(b), \sqrt d)}] = [Q_{M(\wp^{-1}(b), \sqrt d)}]$ for some quaternion algebra $Q$ over $M$.
    Then, there is a quaternion algebra $\tilde Q$ over $F$ such that 
    \[[D_{F(\wp^{-1}(b), \sqrt d)}] = [\tilde Q_{F(\wp^{-1}(b), \sqrt d)}].\]
\end{theorem}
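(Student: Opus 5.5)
The plan is to reduce to \Cref{prop_invTrivial}, applied with the inseparable quadratic extension $K = F(\sqrt d)$ and a pair $a, a' \in F$ with $a + a' = b$. With this choice $E = K(\wp^{-1}(a + a')) = F(\wp^{-1}(b), \sqrt d)$ is exactly the field in the statement. Condition \ref{prop_invTrivial3} of \Cref{prop_invTrivial} for this $E$ is precisely the desired conclusion. So it suffices to produce $a \in F$ such that $D$ splits over $L = F(\sqrt d, \wp^{-1}(a), \wp^{-1}(b))$ with $[L:F] = 8$, and then to show $\inv_F(D) = 0$ by transporting the vanishing from $M$.

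\textbf{Step 1 (the same situation over $M$).} Put $M' = M(\wp^{-1}(b), \sqrt d)$. Since $M/F$ has odd degree, $[M':M] = 4$ and $K_M = M(\sqrt d)$ is still an inseparable quadratic extension. Over $M$, the quaternion algebra $Q$ splits over some $M(\wp^{-1}(c))$ with $c \in M$. Taking $a = c$ and $a' = b + c$, the algebra $D_M$ splits over $L_M = M(\sqrt d, \wp^{-1}(c), \wp^{-1}(b))$. If $Q$ is split, or if $c$ is dependent on $b$ modulo $\wp$, everything degenerates and one concludes directly from the biquadratic kernel \eqref{BrauerKernelBiquadratic}. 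Otherwise \Cref{prop_invTrivial}, direction \ref{prop_invTrivial3}$\Rightarrow$\ref{prop_invTrivial1}, applied over $M$ gives $\inv_M(D_M) = 0$, because the division algebra of $D_{M'}$ is $Q$, which is defined over $M$.

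\textbf{Step 2 (main obstacle: getting $a$ in $F$).} The parameter $c$ from Step 1 lives in $M$, while we need it in $F$. First, since $[M':F']$ is odd, where $F' = F(\wp^{-1}(b), \sqrt d)$, we get $\ind(D_{F'}) = \ind(D_{M'}) \le 2$. Hence $D_{F'}$ is Brauer equivalent to a quaternion algebra over $F'$, split by some $F'(\wp^{-1}(c'))$. Replacing $c'$ by $c'^2$ (note $[1, c'] \cong [1, c'^2]$), we may take $c' \in F(\wp^{-1}(b))$. To descend $c'$ further to $F$, I would use the description $\BrTwo(F'/F) = \BrTwo(F(\wp^{-1}(b))/F) + \BrTwo(F(\sqrt d)/F)$ together with a corestriction/Springer-type argument along $M/F$: the element $[D] - \mathrm{cor}_{M/F}$ of the Step 1 decomposition over $M$ should show that $D$ is split by a triquadratic extension $F(\sqrt d, \wp^{-1}(a), \wp^{-1}(b))$ with $a \in F$. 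I expect this to be the hardest step. If it fails as stated, the fallback is to work with the $2$-fold Pfister norm form of the quaternion part over $F(\wp^{-1}(b))$ and pick $a$ from a value set that is defined over $F$.

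\textbf{Step 3 (injectivity of restriction on the invariant).} Given such an $a$, \Cref{prop_DK_ExistenceInv} defines $\inv_F(D) \in H_2^3(F)/(N(d; a, b+a) \wedge [D])$, and it restricts to $\inv_M(D_M) = 0$. Restricting an Albert form $\varphi$ of $D_K$ gives an Albert form of $D_{K_M}$. The transfer $s_\ast$ commutes with the base change $M/F$, so $\res_{M/F}(e^3(s_\ast\varphi)) = \overline{\dlog \norm_{E_M/M}(\gamma)} \wedge [D_M]$ for some $\gamma \in E_M^\ast$. Now apply $\mathrm{cor}_{M/F}$ to this equation. On the left, $\mathrm{cor}\circ\res$ is multiplication by the odd number $[M:F]$, which is the identity on $H_2^3(F)$. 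On the right, the projection formula together with transitivity of norms gives $\overline{\dlog \norm_{E/F}(\norm_{E_M/E}(\gamma))} \wedge [D] \in N(d; a, b+a) \wedge [D]$. Hence $\inv_F(D) = 0$.

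\textbf{Conclusion.} By \Cref{prop_invTrivial}, direction \ref{prop_invTrivial1}$\Rightarrow$\ref{prop_invTrivial3}, the division algebra of $D_{F(\wp^{-1}(b), \sqrt d)}$ is defined over $F$. It has index at most $2$, so it is $\tilde Q_{F(\wp^{-1}(b), \sqrt d)}$ for an $F$-quaternion algebra $\tilde Q$, which is the claim.
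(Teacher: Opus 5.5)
Your overall architecture is the same as the paper's. You choose $a\in F$ so that $D$ is split by $F(\sqrt d,\wp^{-1}(a),\wp^{-1}(b))$ of degree $8$, get $\inv(D_M)=0$ from \Cref{prop_invTrivial}, push this down to $F$ by corestriction using that $[M:F]$ is odd, and conclude with \Cref{prop_invTrivial} again. Your Step 3 is essentially the paper's transfer computation.

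\textbf{The gap.} It sits exactly where you suspected: Step 2 never produces $a\in F$, and everything else depends on it. Your Frobenius trick is applied over the wrong field. A slot $c'\in F'=F(\wp^{-1}(b),\sqrt d)$ squares only into $F(\wp^{-1}(b))$. Since $F(\wp^{-1}(b))/F$ is separable, no further squaring brings it down to $F$. The corestriction sketch does not show where such an $a$ would come from either: $\mathrm{cor}_{M/F}[\quat{c,\gamma}]$ with $c\in M\setminus F$ is not visibly a symbol with a slot in $F$. Also, your case distinction (split $Q$, dependent slots) is made over $M$ with the auxiliary $c\in M$. It has to be made over $F$, because what matters is whether $D$ itself admits a suitable degree-$8$ splitting field with slots in $F$.

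\textbf{The missing idea.} Work over $K=F(\sqrt d)$ rather than $F'$. Since $[F':K]=2$ and $\ind(D_{F'})=\ind(D_{M'})\le 2$, you get $\ind(D_K)\le 4$.

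If $\ind(D_K)=4$, the Albert form of $D_K$ becomes isotropic over the separable quadratic extension $K(\wp^{-1}(b))=F'$. This yields $[D_K]=[\quat{b,\beta}\otimes\quat{a,\alpha}]$ with $a\in K$ and $\alpha,\beta\in K^\ast$. Now $\quat{a,\alpha}\cong\quat{a^2,\alpha}$ over $K$ and $a^2\in F$, so you may take $a\in F$. Moreover $\ind(D_K)=4$ forces $a$ and $b$ to be independent modulo $\wp(K)$, so $[F(\sqrt d,\wp^{-1}(a),\wp^{-1}(b)):F]=8$ and your Steps 1 and 3 go through.

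If $\ind(D_K)\le 2$, write $[D_K]=[\quat{a,\alpha}]$ with $a\in K$ and again replace $a$ by $a^2\in F$. Then $D$ is split by $F(\sqrt d,\wp^{-1}(a^2))$, and the biquadratic kernel gives $[D]=[\quat{a^2,x}\otimes\quat{y,d}]$. So $\tilde Q=\quat{a^2,x}$ works directly. (The paper handles this case instead via the Albert form of $D$ and \cite[Corollary 2.4]{Ahmad94}.)

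\textbf{A smaller point.} Step 3 needs $\inv(D_M)=0$ for the triple built from this $a\in F$, not for the triple $(c,b+c,d)$ of your Step 1. This is harmless: condition \ref{prop_invTrivial3} of \Cref{prop_invTrivial} only involves $E=K(\wp^{-1}(b))$, so your Step 1 argument applies verbatim to the new triple.
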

\begin{proof}
    Let $K = F(\sqrt d)$.
    Recall that the index of a central simple algebra does not change when extending scalars to a field extension of degree prime to the index, \cite[Corollary 4.5.11]{GilleSzamuely_CSAGalCohomology}.
    We thus have 
    \[\ind(D_{F(\wp^{-1}(b), \sqrt d)}) = \ind(D_{M(\wp^{-1}(b), \sqrt d)}) \leq 2.\]
    
    Consequently, $\ind(D_K) \leq 4$.
    If $D_{K}$ already has index $\leq 2$. 
    Then $\ind(D) \leq 4$.
    \begin{itemize}
    	\item If $\ind(D) \leq 2$, then $D$ is Brauer equivalent to some quaternion $\tilde Q$, and we are done.
	\item If $\ind(D) = 4$, let $\varphi$ be an Albert form associated to $D$.
	This form is anisotropic and becomes isotropic over $F(\sqrt d)$.
	By \cite[Corollary 2.4]{Ahmad94}, we thus have that $\varphi$ is similar to $x[1, y] \perp xd[1, z] \perp [1, y + z]$ for some $x, y, z \in F^\ast$ and we have $[D] = [\quat{y + z, x} + \quat{z, d}]$. 
	In this case, we take $\tilde Q = \quat{y+z, x}$.
    \end{itemize}

    In the remaining case, $D_K$ has index 4.
    Similarly as before, we obtain $[D_K] = [\quat{b, \beta} \otimes \widehat Q]$ for some $\beta \in K^\ast$ and a quaternion algebra $\widehat Q$ over $K$ (using the isotropy of an Albert form over the separable quadratic extension $K(\wp^{-1}(b)) / K$).
    
    Write $\widehat Q = \quat{a, \alpha}$ for suitable $a, \alpha \in K^\ast$.
    Since $\quat{a, \alpha} \cong \quat{a^2, \alpha}$, we may assume $a \in F$.
    Set further $b_1 = a + b \in F$.
    We have 
    \[[D_{L(\wp^{-1}(b), \sqrt d)}] = [Q_{L(\wp^{-1}(b), \sqrt d)}],\]
    which implies 
    \begin{align} \label{eq_RepresentationD_L}
        [D_L] = [Q_L + \quat{c_1, d} + \quat{a + b_1, c_2}]
    \end{align} 
    for some $c_1, c_2 \in L$ (using the kernel of biquadratic extensions in the setting of Brauer groups).
    Because $\widehat Q$ splits over $K(\wp^{-1}(a))$, we see that $D$ splits over $F(\wp^{-1}(a), \wp^{-1}(b_1), \sqrt d)$ and thus, $D_L$ splits over $L(\wp^{-1}(a), \wp^{-1}(b_1), \sqrt d)$.
    So we may apply $\inv$ to $D_L$ resp. $D$ with respect to the triple $(a, b_1, d)$.
    Now \eqref{eq_RepresentationD_L} together with \Cref{prop_invTrivial} implies $\inv(D_L) = 0$.
    
    Let $\inv(D) = \overline u$ for some $u \in H_2^3(F)$.
    Since $\inv(D_L) = 0$, there is some $\gamma \in L(\wp^{-1}(a + b_1), \sqrt d)^\ast$ such that 
    \[\overline u_L = \overline{\dlog\norm_{L(\wp^{-1}(a + b_1), \sqrt d) / L}(\gamma)} \wedge [D_L].\]
    Because $[L:F]$ is odd and $\cha(F) = 2$, we get that $L/F$ is separable.
    Let $s: L \to F$ be the usual trace map and $s_\ast$ its induced transfer on cohomology.
    Using Frobenius reciprocity, we obtain
    \begin{align*}
        u = [L:F]\cdot u &= \overline{\dlog \norm_{L / F}(\norm_{L(\wp^{-1}(a + b_1), \sqrt d) / L}(\gamma))} \wedge [D]\\
        &= \overline{\dlog \norm_{F(\wp^{-1}(b), \sqrt{d}) / F}(\norm_{L(\wp^{-1}(a + b_1), \sqrt d) / F(\wp^{-1}(b), \sqrt{d})}(\gamma))} \wedge [D],
    \end{align*}
    implying $\inv(D) = 0$ (again with respect to the triple $(a, b_1, d)$).
    
    Using \Cref{prop_invTrivial} again, we have 
    \[[D] = [\Tilde{Q} + \quat{a + b_1, x} + \quat{y, d}]\]
    for some $F$-quaternion $\tilde Q$ and scalars $0\neq x, y \in F$.
    Since $a + b_1 = b$ by construction, we have
    $[D_{F(\wp^{-1}(b), \sqrt d)}] = [\Tilde{Q}_{F(\wp^{-1}(b), \sqrt d)}]$ as asserted.
\end{proof}

\begin{remark}
	In \Cref{thm_DescentAlgebras}, we restricted ourselves to the case of mixed biquadratic extensions. 
	For the separable case, one can show that $\widehat Q$ is either of the form $\quat{a, \alpha}$ or $\quat{\alpha, a}$, where $\alpha \in K^\ast$ and $a \in F$.
	In the first case, we may argue as above, but for the second case it does not seem possible to combine $\widehat Q$ and $\quat{b, \beta}$ in a meaningful way.
	This goes along with the fact that there are no intermediate fields between $K = F(\wp^{-1}(d))$ and $L = K(\sqrt a, \wp^{-1}(b))$ other than $K(\sqrt a)$ and $K(\wp^{-1}(b))$, as we already pointed out in \Cref{rem_OtherExtensions}.
\end{remark}

We include a short discussion on the invariant $\delta_{K/F}$ introduced in characteristic 2 in \cite{BarryChapmanLaghribi_DescentBiquaternion} and how the results can be transferred to the case in which $K = F(\sqrt d)$ is an inseprable quadratic extension.

Instead of relying on the trace as in \cite{BarryChapmanLaghribi_DescentBiquaternion} (which vanishes and thus does not carry any information), we use the linear map 
\begin{align} \label{eq_LinearMapInsep}
	s: K \to F\quad \text{with} \quad s(1) = 0 \text{ and }s(\sqrt d) = 1
\end{align} 
and its induced transfer $s_\ast: \W_q(K) \to \W_q(F)$, in line with the construction of the invariant $\inv$.

With this notation, \cite[Proposition 3.3]{BarryChapmanLaghribi_DescentBiquaternion} was transferred to the inseparable case in \cite[Proposition 5.1 (ii)]{AravireLaghribiORyan_TransferInsep}.
We can thus extend \cite[Proposition 3.4]{BarryChapmanLaghribi_DescentBiquaternion} and have the following:
\begin{proposition} \label{prop_TransferVanishesCondition}
	For any $\lambda \in K^\ast$ and any Albert form $\varphi$ over $F$ with Clifford invariant $B$, we have $s_\ast(\lambda\varphi) = 0$ if and only if $e^3(s_\ast(\varphi)) = s_\ast(\overline{\dlog(\lambda)} \wedge[B])$.
\end{proposition}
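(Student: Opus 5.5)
We read the statement in the setting of \cite[Proposition 3.4]{BarryChapmanLaghribi_DescentBiquaternion}: $\varphi$ is an Albert form whose Clifford invariant $B$ has trivial transfer, so that $s_\ast(\varphi)$ and $s_\ast(\lambda\varphi)$ lie in $\I_q^3(F)$ and $e^3$ is defined on them. If $\varphi$ is literally defined over $F$, then $s_\ast(\varphi)=0$ by \cite[Lemma 34.14]{ElmanKarpenkoMerkurjev2008}, and the argument below still applies. The plan is to compare $s_\ast(\lambda\varphi)$ with $s_\ast(\varphi)$ through the Pfister multiple $\bipf{\lambda}\otimes\varphi$, and to translate the resulting relation into cohomology using the compatibility of $e^3$ with the transfer.

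\emph{Step 1 (the key identity).} In $\W_q(K)$ we have $\lambda\varphi \perp \varphi = \bipf{\lambda}\otimes\varphi$. Applying $s_\ast$ gives
\[s_\ast(\lambda\varphi) + s_\ast(\varphi) = s_\ast(\bipf{\lambda}\otimes\varphi)\quad\text{in } \W_q(F).\]
Since $\varphi\in\I_q^2(K)$ with $e^2(\varphi)=[B]$, the form $\bipf{\lambda}\otimes\varphi$ lies in $\I^3_q(K)$ and
\[e^3(\bipf{\lambda}\otimes\varphi)=\overline{\dlog(\lambda)}\wedge[B].\]
Now \cite[Proposition 5.1 (ii)]{AravireLaghribiORyan_TransferInsep} (respectively \cite[Proposition 3.3]{BarryChapmanLaghribi_DescentBiquaternion} in the separable case) states that $e^3\circ s_\ast = s_\ast\circ e^3$ on forms of this kind. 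Applying $e^3$ to the identity therefore gives
\[e^3(s_\ast(\lambda\varphi)) + e^3(s_\ast(\varphi)) = s_\ast\bigl(\overline{\dlog(\lambda)}\wedge[B]\bigr).\]
This is the same computation as in \eqref{eq_e3TraceSum}.

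\emph{Step 2 (the forward direction).} If $s_\ast(\lambda\varphi)=0$, its $e^3$ vanishes, and the displayed relation immediately yields $e^3(s_\ast(\varphi)) = s_\ast(\overline{\dlog(\lambda)}\wedge[B])$.

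\emph{Step 3 (the converse).} Assume $e^3(s_\ast(\varphi)) = s_\ast(\overline{\dlog(\lambda)}\wedge[B])$. By Step 1, $e^3(s_\ast(\lambda\varphi))=0$. Kato's isomorphism $f^3$ then gives $s_\ast(\lambda\varphi)\in\I_q^4(F)$. The form $s_\ast(\lambda\varphi)$ has dimension $2\cdot 6=12<16$, and so does its anisotropic part. By the Arason--Pfister Hauptsatz an anisotropic form in $\I^4_q(F)$ of dimension less than $16$ is zero, so $s_\ast(\lambda\varphi)=0$ in $\W_q(F)$.

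The main obstacle is Step 1. One has to make sure that the cited compatibility of $e^3$ with the transfer for the non-trace map $s$ (inseparable case) applies to $\bipf{\lambda}\otimes\varphi$, and that both $s_\ast(\varphi)$ and $s_\ast(\lambda\varphi)$ genuinely lie in $\I_q^3(F)$. I would first check the latter via the exact sequence \eqref{eq_ComplexExtTransfer} together with the hypothesis on $B$. Granting this, Steps 2 and 3 are formal.
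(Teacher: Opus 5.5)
Your argument is correct and is the route the paper intends. The paper gives no separate proof; it obtains the statement by rerunning the proof of \cite[Proposition 3.4]{BarryChapmanLaghribi_DescentBiquaternion} with \cite[Proposition 5.1 (ii)]{AravireLaghribiORyan_TransferInsep} in place of \cite[Proposition 3.3]{BarryChapmanLaghribi_DescentBiquaternion}: the identity $s_\ast(\lambda\varphi)+s_\ast(\varphi)=s_\ast(\bipf{\lambda}\otimes\varphi)$, the computation of \eqref{eq_e3TraceSum}, and the Arason--Pfister Hauptsatz in dimension $12<16$. Your reading of $\varphi$ as an Albert form over $K$ with $s_\ast([B])=0$ is the right one, since it matches the setting in which $\delta_{K/F}$ is defined immediately afterwards.
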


Building on this we can define the alternative version $\delta_{K/F}(B)$ for an inseparable quadratic extension $K/F$ and a biquaternion algebra $B$ with Albert form $\varphi$ and $s_\ast([B])$ trivial as the class of $e^3(s_\ast(\varphi))$ in the quotient
\[H_2^3(F) / s_\ast(\overline{\dlog K^\ast} \wedge [B]).\]

The following result holds for the invariant $\delta_{K / F}$ with the similar proof as in the separable case:

\begin{proposition} \label{prop_BiquaternionDescentIffDelta0}
	Let $K / F$ be an inseparable quadratic extension, $K = F(\sqrt d)$ and $s : K \to F$ be the map from \eqref{eq_LinearMapInsep} and $s_\ast : \BrTwo(K) \to \BrTwo(F)$ its induced transfer.
	Let $B$ be a biquaternion algebra over $K$ with $s_\ast([B]) = 0$.
	Then $B$ has a descent to $F$ if and only if $\delta_{K / F}(B) = 0$.
\end{proposition}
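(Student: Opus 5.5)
We mirror the proof of \cite[Theorem 3.6]{BarryChapmanLaghribi_DescentBiquaternion}, replacing the trace by the map $s$ of \eqref{eq_LinearMapInsep}. The ingredients are the exactness of $\W_q(F)\to\W_q(K)\to\W_q(F)$ from \Cref{lem_TransferExactSequenceInsep}, and the compatibility $e^3\circ s_\ast = s_\ast\circ e^3$ on $\I^3_q$ from \cite[Proposition 5.1 (ii)]{AravireLaghribiORyan_TransferInsep}. We also use \Cref{prop_TransferVanishesCondition}, read with $\varphi$ an Albert form over $K$ of $B$, which is how it is used in the definition of $\delta_{K/F}$.

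For the forward direction, suppose $B\cong B'_K$ with $B'$ a biquaternion $F$-algebra. Then an Albert form $\varphi'$ of $B'$ gives an Albert form $\varphi'_K$ of $B$. By \Cref{lem_TransferExactSequenceInsep}, or directly since $s(1)=0$, we have $s_\ast(\varphi'_K)=0$. Albert forms of $B$ are unique up to similarity, so any Albert form $\varphi$ of $B$ is $\lambda\varphi'_K$ for some $\lambda\in K^\ast$. Replacing $\lambda$ by $\lambda^{-1}$ and applying \Cref{prop_TransferVanishesCondition} to $\varphi$, the vanishing $s_\ast(\lambda^{-1}\varphi)=0$ gives $e^3(s_\ast(\varphi))=s_\ast(\overline{\dlog\lambda^{-1}}\wedge[B])$. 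This lies in $s_\ast(\overline{\dlog K^\ast}\wedge[B])$, so $\delta_{K/F}(B)=0$. We should first check that $\delta_{K/F}$ is well defined, i.e. independent of the choice of Albert form. This is the same computation: for $\varphi$ and $\mu\varphi$, the form $\varphi\perp\mu\varphi=\bipf{\mu}\otimes\varphi$ lies in $\I^3_q(K)$, and $e^3(s_\ast(\varphi\perp\mu\varphi))=s_\ast(\overline{\dlog\mu}\wedge[B])$. We also need $s_\ast(\varphi)\in\I^3_q(F)$. This follows from $s_\ast([B])=0$ together with the trivial Arf invariant and even dimension of transfers, so $e^3$ applies.

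For the converse, suppose $\delta_{K/F}(B)=0$. Then $e^3(s_\ast(\varphi))=s_\ast(\overline{\dlog\lambda}\wedge[B])$ for some $\lambda\in K^\ast$. We need the lemma that every element of $s_\ast(\overline{\dlog K^\ast}\wedge[B])$ has this form $s_\ast(\overline{\dlog\lambda}\wedge[B])$ for a single $\lambda$. This holds because $\dlog$ is multiplicative and $\wedge[B]$ is additive, so the set is a group image of $K^\ast$. By \Cref{prop_TransferVanishesCondition}, $s_\ast(\lambda\varphi)=0$ in $\W_q(F)$. By \Cref{lem_TransferExactSequenceInsep}, $\lambda\varphi\sim\psi_K$ for some nonsingular $F$-form $\psi$. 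We may take $\psi$ anisotropic and compare dimensions.

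The main obstacle is turning this Witt equivalence into an honest Albert form over $F$. If $\lambda\varphi$ is anisotropic over $K$, then $\dim\psi_{\an}\ge 6$. Also, $\psi_K$ has trivial Arf invariant and trivial discriminant-type data over $K$. We would like $\psi$ to be $6$-dimensional with trivial Arf invariant over $F$. The first attempt is to rerun the inductive argument in the proof of \Cref{lem_TransferExactSequenceInsep}. That argument peels off binary subforms $a[1,x^2]$ with $a\in F$, so it yields $\lambda\varphi\cong\psi_K$ on the nose, with $\dim\psi=6$. For the Arf invariant, $\Delta(\psi)$ lies in the kernel of $F/\wp(F)\to K/\wp(K)$. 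That kernel is trivial because $K/F$ is purely inseparable: if $c=x^2+x$ with $x\in K$, then $x$ is separable algebraic over $F$, so $x\in F$. Hence $\psi$ is an Albert form over $F$. If $\lambda\varphi$ is isotropic, then $\ind(B)\le 2$, and descent follows directly from the description of quaternion algebras, since $[1,x]\cong[1,x^2]$. Finally, the Clifford algebra $C(\psi)$ is a biquaternion $F$-algebra with $C(\psi)_K$ Brauer equivalent to $B$, both of degree $4$. Therefore $B\cong C(\psi)_K$, which gives the descent.
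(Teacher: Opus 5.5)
Your route is the one the paper intends. The paper gives no separate proof and only says that the argument of \cite[Theorem 3.6]{BarryChapmanLaghribi_DescentBiquaternion} carries over with the trace replaced by $s$, which is what you do via \Cref{prop_TransferVanishesCondition} and the peeling argument of \Cref{lem_TransferExactSequenceInsep}. The following parts are fine:
\begin{itemize}
\item the forward direction and the well-definedness check;
\item the isometry $\lambda\varphi\cong\psi_K$ with $\dim\psi=6$ in the anisotropic case (each residual form is anisotropic as a subform, and its transfer is nonsingular and Witt-trivial, hence hyperbolic);
\item the triviality of $\ker(F/\wp(F)\to K/\wp(K))$ for purely inseparable $K/F$, which is even cleaner than the separable case, where $\Delta(\psi)\in\{0,d\}$ has to be adjusted.
\end{itemize}

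Two steps need repair. First, your isotropic case is wrong as written. Rewriting $B\sim\quat{x,y}\cong\quat{x^2,y}$ only moves the Artin--Schreier slot into $F$; the other slot $y$ stays in $K^\ast$, and such algebras need not descend. For example, $\quat{c,\sqrt d}$ with $\quat{c,d}\neq 0$ has $s_\ast(\quat{c,\sqrt d})=\quat{c,d}\neq 0$ by \Cref{lem_basicComputationsTransfer}, whereas anything coming from $F$ has trivial transfer. The hypothesis therefore has to be used in this case too, and no case split is needed. Since $s_\ast((\lambda\varphi)_\an)=s_\ast(\lambda\varphi)=0$ in $\W_q(F)$ and the transfer of a nonsingular form is nonsingular, $s_\ast((\lambda\varphi)_\an)$ is hyperbolic. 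Peeling then gives $(\lambda\varphi)_\an\cong\psi'_K$ with $\psi'$ over $F$, and $\psi:=\psi'\perp m\H$ satisfies $\psi_K\cong\lambda\varphi$.

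Second, $C(\psi)$ has degree $8$, not $4$, so it is not itself a biquaternion algebra. Since $\Delta(\psi)=0$, write $\psi\cong x_1[1,a_1]\perp x_2[1,a_2]\perp x_3[1,a_1+a_2]$. Then $C(\psi)\cong M_2(B')$ with $B'=\quat{a_1,x_1x_3}\otimes\quat{a_2,x_2x_3}$. Also $c(\lambda\varphi)=c(\varphi)=[B]$, because scaling changes the Clifford invariant by $\quat{\Delta(\varphi),\lambda}$ and $\Delta(\varphi)=0$. Hence $B'_K$ and $B$ are Brauer equivalent algebras of degree $4$, so $B\cong B'_K$.
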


After having discussed the generalization of $\delta_{K / F}$ for inseparable extensions, we turn our attention back to descents of algebras with respect to odd degree extensions.
Now we will cover biquaternion algebras.
Note that the proof of the following result does not rely on $\inv$ but on $\delta_{K / F}$.

\begin{proposition} \label{prop_BiquaternionDescent}
    Let $L/F$ be an odd degree field extension, $K = F(\theta)$ a quadratic extension (separable or inseparable) of $F$ and $D \in \BrTwo(F)$.
    Suppose there is a biquaternion algebra $B$ over $L$ such that $[D_{L(\theta)}] = [B_{L(\theta)}]$.
    Then there is a biquaternion algebra $\Tilde{B}$ over $F$ such that $[D_{K}] = [\tilde B_{K}]$. 
\end{proposition}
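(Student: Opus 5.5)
We argue along the lines of the proof of \Cref{thm_DescentAlgebras}, using $\delta_{K/F}$ (or its inseparable analogue from \Cref{prop_BiquaternionDescentIffDelta0}) in place of $\inv$, and then a transfer argument along the odd degree extension $L/F$.

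\emph{Reduction to the biquaternion case.} Since $[L(\theta):K]=[L:F]$ is odd, \cite[Corollary 4.5.11]{GilleSzamuely_CSAGalCohomology} gives $\ind(D_K)=\ind(D_{L(\theta)})\le 4$. Choose a biquaternion algebra $B'$ over $K$ with $[D_K]=[B']$. Because $D$ is defined over $F$, the exact sequence \eqref{eq_ComplexExtTransfer} gives $s_\ast([B'])=0$, so $\delta_{K/F}(B')$ is defined. Let $\varphi$ be an Albert form of $B'$. As in \eqref{eq_TrVarphiInI3}, or by \cite[Proposition 3.3]{BarryChapmanLaghribi_DescentBiquaternion} resp.\ \cite[Proposition 5.1 (ii)]{AravireLaghribiORyan_TransferInsep}, $s_\ast(\varphi)$ lies in $\I^3_q(F)$, and we set $u=e^3(s_\ast(\varphi))$. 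By \cite[Theorem 3.6]{BarryChapmanLaghribi_DescentBiquaternion} resp.\ \Cref{prop_BiquaternionDescentIffDelta0}, the statement reduces to showing that $\delta_{K/F}(B')=0$, that is,
\[u\in s_\ast\bigl(\overline{\dlog K^\ast}\wedge[B']\bigr).\]

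\emph{Vanishing over $L$.} Over $L(\theta)$ we have $[B'_{L(\theta)}]=[D_{L(\theta)}]=[B_{L(\theta)}]$, and both algebras have degree $4$. Hence $B'_{L(\theta)}\cong B_{L(\theta)}$, so $B'_{L(\theta)}$ descends to $L$ and $\delta_{L(\theta)/L}(B'_{L(\theta)})=0$. The transfer commutes with the scalar extension $L/F$: $L$ and $K$ are linearly disjoint over $F$, and the linear map $s$ extends $L$-linearly to $L(\theta)\to L$. Therefore $u_L=e^3(s_\ast(\varphi_{L(\theta)}))$, and we get
\[u_L=s_\ast\bigl(\overline{\dlog\lambda}\wedge[B'_{L(\theta)}]\bigr)\quad\text{for some }\lambda\in L(\theta)^\ast.\]

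\emph{Odd degree transfer.} Since $[L:F]$ is odd, $L/F$ is separable. Apply the trace transfer $\trace_{L/F}$ to the last equality. On the left, $\trace_{L/F}(u_L)=[L:F]\,u=u$, because $u$ is $2$-torsion. On the right, the transfers $s_\ast\circ\trace_{L(\theta)/K}=\trace_{L/F}\circ s_\ast$ commute, and projection formulas apply to $[B'_{L(\theta)}]$, which is extended from $K$. This gives
\[\trace_{L(\theta)/K}\bigl(\overline{\dlog\lambda}\wedge[B'_{L(\theta)}]\bigr)=\overline{\dlog \norm_{L(\theta)/K}(\lambda)}\wedge[B'].\]
Hence $u=s_\ast\bigl(\overline{\dlog\norm_{L(\theta)/K}(\lambda)}\wedge[B']\bigr)$, so $\delta_{K/F}(B')=0$, and we take $\tilde B$ to be the resulting descent of $B'$ to $F$.

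\emph{Main obstacle.} The difficult step is justifying these compatibilities: that transfers in Kato--Milne cohomology commute in the square formed by $F\subset K$ and $F\subset L$, and the projection formula with $\dlog$ of a norm. In the inseparable case these have to be checked for the nonstandard map $s$. I would verify them on generators $c\,\dlog x_1\wedge\dlog x_2$ by choosing compatible bases, so that $s\otimes\mathrm{id}_L$ is the extended map, and then using \Cref{lem_basicComputationsTransfer}.
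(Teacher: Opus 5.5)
Your argument is correct, but it takes a different route from the paper's proof.

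\textbf{Your route.} You work in Kato--Milne cohomology. Starting from $\delta_{L(\theta)/L}(B'_{L(\theta)})=0$, you push the identity down with $\trace_{L/F}$, using that restriction followed by transfer is multiplication by the odd number $[L:F]$. You then use $s_\ast\circ\trace_{L(\theta)/K}=\trace_{L/F}\circ s_\ast$ and the projection formula to obtain $\delta_{K/F}(B')=0$. Finally you invoke the implication ``$\delta_{K/F}=0$ implies descent'' (\cite[Theorem 3.6]{BarryChapmanLaghribi_DescentBiquaternion} resp.\ \Cref{prop_BiquaternionDescentIffDelta0}). This is the same pattern as the paper's proof of \Cref{thm_DescentAlgebras}.

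\textbf{The paper's route.} The paper stays at the level of quadratic forms.
\begin{enumerate}
\item It writes an Albert form of $B$ as $\alpha\varphi_{L(\theta)}$ and notes that its transfer to $\W_q(L)$ vanishes, because the form is defined over $L$.
\item It chases this form around the square \eqref{eq_commDiagramTrace} of Scharlau transfers. Commutativity there is just the identity $s_{K/F}\circ t_{L(\theta)/K}=t_{L/F}\circ s_{L(\theta)/L}$ of linear maps.
\item Frobenius reciprocity gives $s_{K/F,\ast}(t_{L(\theta)/K,\ast}(\bif{\alpha})\otimes\varphi)=0$.
\item The bilinear form $t_{L(\theta)/K,\ast}(\bif{\alpha})$ is odd-dimensional, so it can be written as $\bif{x}+\psi$ with $\psi\in\I^2(K)$. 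This yields $s_{K/F,\ast}(x\varphi)\in\I^4_q(F)$.
\item This $12$-dimensional form is therefore hyperbolic by the Arason--Pfister Hauptsatz.
\item Exactness of $\W_q(F)\to\W_q(K)\to\W_q(F)$ (\Cref{lem_TransferExactSequenceInsep} in the inseparable case) then shows that $x\varphi$ is defined over $F$.
\end{enumerate}

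\textbf{Comparison.} The underlying mechanism is the same: your $\norm_{L(\theta)/K}(\lambda)$ plays the role of the paper's $x$.
\begin{enumerate}
\item The form-level chase avoids exactly the cohomological compatibilities you flag as your main obstacle. These are available anyway, e.g.\ via \cite[Theorem 2.5]{AravireLaghribiORyan_TransferInsep}, as the paper notes after the proof.
\item The paper's chase does not actually need the descent criterion via $\delta_{K/F}$. The paper only asserts the inseparable version of that criterion, with a ``similar proof''.
\item Your version, in return, is more conceptual. It parallels \Cref{thm_DescentAlgebras} and exhibits the vanishing of $\delta_{K/F}(B')$ directly.
\end{enumerate}
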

\begin{proof}
    As in the proof of \Cref{thm_DescentAlgebras}, we see that $\ind(D_K) \leq 4$, so we can choose an Albert form $\varphi$ corresponding to $D_K$.
    By assumption, we have $\delta_{L(\theta) / L}(D_{L(\theta)}) = 0$, because the division algebra Brauer equivalent to $D_{L(\theta)}$ is defined over $L$.
    By \cite[Proposition 3.4 (3), Theorem 3.6]{BarryChapmanLaghribi_DescentBiquaternion} (resp. its inseparable analogues \Cref{prop_TransferVanishesCondition} and \Cref{prop_BiquaternionDescentIffDelta0}), there is $\lambda \in L(\delta)^\ast$ such that $s_{L(\theta)/L, \ast}(\lambda \varphi) = 0$, where again $s_{L(\theta)/L, \ast}$ is the transfer homomorphism induced by the $L$-linear map $s_{L(\theta)/L}: L(\theta) \to L$ with $s_{L(\theta)/L}(1) = 0$ and $s_{L(\theta)/L}(\theta) = 1$.
    
    Since $L / F$ is separable (because it is of odd degree), we find an element $\mu \in L$ such that $L = F(\mu)$.
    Let $t_{L/F}: L \to F$ be the $F$-linear map with $t_{L/F}(1) = 1$ and $t_{L/F}(\mu^k) = 0$ for $k \in \{1, \ldots, n\}$.
    Similarly, we can define $s_{K/F}$ and $t_{L(\theta)/ K}$.
    By checking on an $F$-basis of $L(\theta)$ we readily verify $s_{K/F} \circ t_{L(\theta) / K} = t_{L/F} \circ s_{L(\theta) / L}$.
    
    By considering the induced transfers, we thus have a commutative diagram
    \begin{align}\label{eq_commDiagramTrace}
        \begin{diagram}
            & & \W_q(L(\theta)) & & \\
            & \ldTo^{s_{L(\theta) / L, \ast}} & & \rdTo^{t_{L(\theta)/K, \ast}}\\
            \W_q(L) & & & & \W_q(K)\\
            & \rdTo_{t_{L/F, \ast}} & & \ldTo_{s_{K / F, \ast}} \\
            & & \W_q(F) & &
        \end{diagram}
    \end{align}
    Let $\Phi$ be an Albert form of $D_L$.
    Since Albert forms for the same biquaternion algebra are similar, there is $\alpha \in L(\theta)^\ast$ such that $\Phi = \alpha \varphi_{L(\theta)}$.
    
    Since $\Phi$ is defined over $L$, when taking the left path in \eqref{eq_commDiagramTrace}, we see that $\Phi_{L(\theta)} = \alpha\varphi_{L(\theta)}$ is mapped to zero in $\W_q(L)$ and hence to zero in $\W_q(F)$.
    On the other hand when taking the right path, we first see
    \[t_{L(\theta)/K, \ast}(\Phi_{L(\theta)}) = t_{L(\theta)/K, \ast}(\alpha\varphi_{L(\theta)}) = t_{L(\theta)/K, \ast}(\bif{\alpha}) \otimes \varphi.\]
    Here $t_{L(\theta)/K, \ast}(\bif{\alpha})$ is a bilinear form of odd dimension $[L(\theta) : K] = [L : F]$, so it is Witt equivalent to $\bif{x} \perp \psi$, where $\psi \in \I^2(K)$.
    Applying $s_{K/F, \ast}$ and comparing with the result from the left path in \eqref{eq_commDiagramTrace} now gives us 
    \[0 = s_{K/F, \ast}(x\varphi) \perp s_{K/F, \ast}(\psi \otimes \varphi).\]
    Using \cite[Corollary 21.5]{ElmanKarpenkoMerkurjev2008}, we see $s_{K/F, \ast}(\psi \otimes \varphi) \in \I_q^4(F)$, so we also obtain $s_{K/F, \ast}(x\varphi) \in \I_q^4(F)$.
    The Arason-Pfister Hauptsatz implies $s_{K/F, \ast}(x\varphi)$ to be hyperbolic.
    This shows that $x\varphi$ is defined over $F$ by \cite[Theorem 34.9]{ElmanKarpenkoMerkurjev2008} resp. \Cref{lem_TransferExactSequenceInsep}, and the claim follows.
\end{proof}

Note that the commutativity of the analogous diagram for \eqref{eq_commDiagramTrace} in Kato-Milne cohomology is known by \cite[Theorem 2.5]{AravireLaghribiORyan_TransferInsep}.

\subsection*{Odd Degree Descent for Quadratic Forms} 

A field extension $K / F$ is called \emph{excellent} if for any quadratic form over $F$, the anisotropic part $(\varphi_K)_\an$ is defined over $F$, i.e. there is a quadratic form $\psi$ over $F$ such that $(\varphi_K)_\an \cong \psi_K$. 

It is known that quadratic separable extensions and purely inseparable multiquadratic field extensions are excellent, see \cite[Corollary 22.12]{ElmanKarpenkoMerkurjev2008} resp. \cite[Theorem 3.4]{Hof4}.
On the other hand, biquadratic extensions are not excellent in general, see \cite[Example 6.1]{LaghribiMukhija_ExcellenceInsepQuartic} and \cite[Example 3.3]{BingolChapmanLaghribi_MixedSplittingFields} for examples of biquadratic separable resp. biquadratic mixed field extensions that are not excellent.
For a biquadratic field extension $K / F$ which is not purely inseparable and a given quadratic form $\varphi$ over $F$, it is thus desirable to have criteria predicting whether $(\varphi_K)_\an$ is defined over $F$.

Using our invariant $\inv$, especially the descent result for central simple algebras proved in \Cref{thm_DescentAlgebras}, we develop a criterion relative to an odd degree extension for the case of a mixed biquadratic field extension.
More precisely, we have the following:

\begin{theorem} \label{thm_descentQF}
    Let $L/F$ be a finite field extension of odd degree and $\varphi$ be a quadratic form.
    Let $E / F$ be a biquadratic extension of degree 4 such that $(\varphi_{LE})_\an$ is defined over $L$.
    Then $(\varphi_E)_\an$ is defined over $F$ in the following situations:
    \begin{enumerate}
        \item \label{thm_descentQF4} $E = F(\wp^{-1}(b), \sqrt d)$, $\varphi$ is nonsingular, $\dim(\varphi_E)_\an = 4$ and $\Delta(\varphi_E) = 0$;
        \item \label{thm_descentQF3} $E = F(\wp^{-1}(b), \sqrt d)$ and $\dim(\varphi_{E})_\an = 3$;
        \item \label{thm_descentQFleq2} $\dim(\varphi_{E})_\an \leq 2$.
    \end{enumerate}
\end{theorem}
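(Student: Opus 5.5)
In each case we reduce to a descent statement for low-dimensional forms or for their Clifford algebras, and then invoke \Cref{thm_DescentAlgebras} or \Cref{prop_BiquaternionDescent}. Throughout, write $\psi := (\varphi_{E})_\an$ and let $\psi'$ be an $L$-form with $\psi'_{LE} \cong (\varphi_{LE})_\an$. Since $[LE:E] = [L:F]$ is odd, Springer's theorem for odd degree extensions (valid in characteristic 2) gives $(\varphi_{LE})_\an \cong \psi_{LE}$. In particular $\dim \psi' = \dim \psi$, and $\psi$ and $\psi'$ have the same type.

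\textbf{Case \ref{thm_descentQF4}.} Here $\psi$ is nonsingular of dimension 4 with trivial Arf invariant, so $\psi \in \GP_2(E)$, say $\psi \cong \lambda\pi$ with $\pi$ the norm form of a quaternion algebra. Moreover $\varphi \equiv \psi$ in $\W_q(E)$.
\begin{enumerate}
\item Let $D$ be a central simple $F$-algebra with $[D] = c(\varphi) + \quat{\Delta(\varphi),1}$-type correction. Concretely, replace $\varphi$ by $\varphi \perp [1,\Delta(\varphi)]$, which changes $(\varphi_E)_\an$ only by a hyperbolic plane, since $\Delta(\varphi)\in\wp(E)$. Then $[D_E]$ is the class of the quaternion algebra with norm form $\pi$.
\item Since $\psi'_{LE}$ is a general 2-fold Pfister form over $LE$, $\psi'$ is similar over $L$ to a 2-fold Pfister form up to the Arf invariant of $\psi'$, which lies in $\ker(L/\wp(L) \to LE/\wp(LE))$. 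Accordingly $[D_{LE}] = [Q_{LE}]$ for some quaternion algebra $Q$ over $L$.
\item \Cref{thm_DescentAlgebras} gives an $F$-quaternion algebra $\tilde Q$ with $[D_E] = [\tilde Q_E]$. Let $\tilde\pi$ be its norm form. Then $\pi_E \cong \tilde\pi_E$ by the classification of 2-fold Pfister forms via their Clifford invariant.
\item It remains to descend the scalar. We have $\psi \cong \lambda\tilde\pi_E$ with $\lambda \in E^\ast$, and we want $\lambda$ replaceable by an element of $F^\ast$. Consider the forms $\varphi \perp \mu\tilde\pi$ for $\mu\in F^\ast$. Over $LE$, the form $\psi'$ gives such a $\mu \in L^\ast$, namely $\psi'_{LE}$ is similar by an $L$-scalar to $\tilde\pi_{LE}$. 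So $\varphi_L \perp \mu\tilde\pi_L$ lies in $\I_q^3$ over $LE$, and by dimension counting it is hyperbolic there.
\end{enumerate}
The obstacle is now to pass from $L$ back to $F$. I would try the transfer $t_{L/F,\ast}$ argument as in \Cref{prop_BiquaternionDescent}: the odd-dimensional bilinear form $t_\ast\bif{\mu}$ is $\bif{x}+\I^2$, and Frobenius reciprocity pushes the error into $\I_q^4$. The Arason--Pfister Hauptsatz then kills it, since the relevant form over $E$ has dimension $<16$. This should give $(\varphi_E)_\an \cong x\tilde\pi_E$.

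\textbf{Case \ref{thm_descentQF3}.} Write $\psi \cong \qf{c}\perp\rho$ or $\psi$ of type $(1,1)$; the quasilinear part $\ql{\psi}$ is defined over $F$ after scaling. First descend the quasilinear part using $\ql{\varphi}_E$: purely inseparable-friendly excellence of the quasilinear part handles this. If $\psi = \rho\perp\qf{c}$ with $\rho$ nonsingular of dimension 2, consider $\psi\perp\qf{c}\sim\rho\perp c[0,0]$-type reduction, or equivalently the 4-dimensional form $c\psi\perp\qf{1}$ completed to $c\rho\perp [1,\Delta(\rho)]\cdot$, whose anisotropic part is controlled by a quaternion algebra. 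Then proceed exactly as in case \ref{thm_descentQF4} via \Cref{thm_DescentAlgebras}. This is the step where I expect the most bookkeeping.

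\textbf{Case \ref{thm_descentQFleq2}.} Dimension 0 is trivial. In dimension 1, $\psi = \qf{c}$, and $c$ can be taken in $\D_E(\varphi)\cap$ a value set. Descend by the odd-degree norm trick: $c$ is defined over $L$ up to $LE^{\ast2}$, and Springer's theorem together with the odd-degree transfer gives an $F$-value. In dimension 2, a nonsingular $\psi = \lambda[1,e]$ is handled by its determinant class and Arf invariant. Here $e$ is controlled by $\Delta(\varphi)$ and is hence in $F$, and $\lambda$ descends by the transfer/Hauptsatz argument above with target degree $3$: $\varphi\perp x[1,e]$ becomes isotropic enough, using \Cref{prop_BiquaternionDescent} with biquaternion Clifford invariants. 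A totally singular $\psi$ follows from excellence of purely inseparable parts together with Springer's theorem.
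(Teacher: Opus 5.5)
Your architecture matches the paper's. In (i) you pass to trivial Arf invariant, apply \Cref{thm_DescentAlgebras} to the Clifford invariant to get $\tilde Q$ over $F$, use roundness to put the scalar $\mu$ in $L$, and push down along $L/F$ by a transfer with $t_\ast\bif{\mu}\equiv\bif{x} \bmod \I^2(F)$ and the Hauptsatz. Case (ii) reduces to (i) by a trivial-Arf nonsingular completion, and (iii) is the same transfer argument one level lower.

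\textbf{The gap in the transfer step.} The step you flag as ``the obstacle'' is missing its key ingredient. Applying $t_\ast$ to the fact that $\varphi_L\perp\mu\tilde\pi_L$ is hyperbolic over $LE$ only gives $\varphi+t_\ast(\bif{\mu})\otimes\tilde\pi=t_\ast(\tau)$ in $\W_q(F)$, with $\tau\in\ker(\W_q(L)\to\W_q(LE))$. Frobenius reciprocity and $t_\ast\bif{\mu}\equiv\bif{x}$ only give $t_\ast(\mu\tilde\pi_L)\equiv x\tilde\pi \bmod \I^4_q(F)$. Nothing you wrote explains why $t_\ast(\tau)$ vanishes over $E$, and without that there is no relation over $E$ to which the Hauptsatz can be applied. \Cref{prop_BiquaternionDescent} does not supply this: its diagram composes two transfers, whereas here you need transfer against restriction.

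\textbf{How to close it.} The paper uses Ahmad's explicit description of the Witt kernel of $LE/L$, which gives $\varphi_L=\mu\tilde\pi+\psi_1\otimes[1,b]+\bipf{d}\otimes\psi_2$ in $\W_q(L)$. Frobenius reciprocity then turns the last two summands into $t_\ast(\psi_1)\otimes[1,b]+\bipf{d}\otimes t_\ast(\psi_2)$, which visibly die over $E$; in (iii) the analogous kernel theorems for each type of $E$ are used. An alternative, uniform in the type of $E$, is the elementary compatibility of the Scharlau transfer with scalar extension for the linearly disjoint pair $L,E$: $(t_\ast\tau)_E\cong(t\otimes\mathrm{id}_E)_\ast(\tau_{LE})$. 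You also need $t_\ast(\varphi_L)=t_\ast(\bif{1})\otimes\varphi$ with $t_\ast\bif{1}=\bif{1}$ in $\W(F)$ for $t(1)=1$, $t(\gamma^k)=0$. With either fix, your ending is correct: work modulo $\I^2(F)$, then use dimension $8<16$ (resp.\ $4<8$ in (iii)). This is exactly the precision the argument needs.

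\textbf{Case (ii).} This is not yet an argument. Some formulas are wrong: $\qf{c}\perp\qf{c}\cong\qf{c,0}$, not $c[0,0]$, and $c\psi\perp\qf{1}$ is not a nonsingular $4$-dimensional form. More importantly, to invoke (i) the completion must be done over $F$, not on $\psi$ over $E$. Concretely:
\begin{enumerate}
\item Descend the quasilinear part: replace $\varphi$ by $\varphi_r\perp\qf{c}$ with $c\in F$, which has the same anisotropic part over $E$ and over $LE$.
\item Put $\widehat\varphi=\varphi_r\perp c[1,\Delta(\varphi_r)]$.
\item Verify the hypotheses of (i) for $\widehat\varphi$, namely that $(\widehat\varphi_{LE})_\an$ is $4$-dimensional and defined over $L$. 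It is $c\qpf{xc,a}$ when $(\varphi_{LE})_\an\cong x[1,a]\perp\qf{c}$ with $x,a\in L$.
\item Return to $\varphi$ using $c[1,t]\perp\qf{c}\cong\H\perp\qf{c}$.
\end{enumerate}

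\textbf{Case (iii).} The appeal to \Cref{prop_BiquaternionDescent} is misplaced, since it concerns quadratic rather than biquadratic extensions, and it is unnecessary. You should also first discard the part of $\ql{\varphi}$ that becomes $\qf{0,\dots,0}$ over $E$, so that you work with a nonsingular form in $\W_q$.
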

\begin{proof}
    \ref{thm_descentQF4}:
    Since $\Delta(\varphi_E) = 0$, we already have $\Delta(\varphi_{F(\wp^{-1}(b))}) = 0$, so $\Delta(\varphi) \in \{0, b\}$.
    Since the anisotropic part of $\varphi_E$ is the same as the anisotropic part of $(\varphi \perp [1, b])_E$, we may assume $\Delta(\varphi) = 0$, i.e. $\varphi \in \I^2_q(F)$.
    
    Let $[D]$ denote its Clifford invariant.
    Since $(\varphi_{L(\wp^{-1}(b), \sqrt d)})_\an$ is defined over $L$ and of dimension $4$ by assumption, its Clifford invariant $[D_{L(\wp^{-1}(b), \sqrt d)}]$ is equal to $[Q_{L(\wp^{-1}(b), \sqrt d)}]$, where $Q$ is a quaternion algebra defined over $L$.
    By \Cref{thm_DescentAlgebras}, this property descents to $F$, i.e. there is a quaternion algebra $\widehat Q$ over $F$ such that $[D_{F(\wp^{-1}(b), \sqrt d)}] = [\widehat Q_{F(\wp^{-1}(b), \sqrt d)}]$.
    
    We now deduce $Q_{L(\wp^{-1}(b), \sqrt d)} \cong \widehat Q_{L(F(\wp^{-1}(b), \sqrt d)}$.
    Since the norm forms of quaternion algebras are unique up to isometry, we find $\lambda \in L(\wp^{-1}(b), \sqrt d)$ such that $(\varphi_{L(\wp^{-1}(b), \sqrt d)})_\an \cong \lambda\widehat \pi$, where $\widehat\pi$ is the norm form of $\widehat Q$.
    Using the roundness of Pfister forms we can suppose $\lambda \in L$ because $(\varphi_{LE})_\an$ is defined over $L$.
    
    Using \cite[Theorem 2.1 (ii)]{Ahmad04}, we obtain 
    \[\varphi_L = \lambda\widehat\pi + \psi_1\otimes [1, b] + \bipf{d} \otimes \psi_2 \in \W_q(L)\]
    for a bilinear form $\psi \in \W(L)$ and a quadratic form $\psi_2 \in \W_q(L)$.
    
    Since $L / F$ is separable, there is some $\gamma \in L$ such that $L = F(\gamma)$.     
    Let $s: L \to F$ be the linear map with $s(1) = 1$ and $s(\gamma^k) = 0$ for $1 \leq k < [L : F]$.
    Using Frobenius reciprocity, we have
    \begin{align*}
        \varphi &= s_\ast(\lambda\widehat\pi + \psi_1\otimes [1, b] + \bipf{d} \otimes \psi_2)\\
        &= s_\ast(\bif{\lambda})\widehat\pi + s_\ast(\psi_1)\otimes [1, b] + \bipf{d}\otimes s_\ast(\psi_2).
    \end{align*}
    It is easy to see that $s_\ast(\bif{\lambda})$ is Witt equivalent to $\bif{\norm_{F(\lambda) / F}(\lambda)}$.
    The conclusion now follows since $(\varphi_{E})_\an \cong s_\ast(\bif{\lambda}) \otimes \widehat\pi$.
    
    \ref{thm_descentQF3}: 
    Since by \cite[Lemma 2.2]{HL04}, for any quadratic form $\varphi$ (of arbitrary type) over $F$ and any field extension $K/F$, $(\ql{\varphi_K})_\an$ is defined over $F$, it is enough to consider the case in which $(\varphi_E)_\an$ is of type $(1, 1)$.
    More precisely we can write $\varphi = \varphi_r \perp \ql \varphi$ with $\varphi_r$ nonsingular such that $\ql{\varphi_{LE}} \cong \qf{s, 0, \ldots, 0}$, where $s \in F^\ast$.
    Using \cite[Lemma 2.6]{HL04} and the assumption, we get $((\varphi_r \perp \qf{s})_{LE})_\an \cong x[1, a] \perp \qf s$ for some $x, a \in L^\ast$.
    
    We choose the nonsingular completion with trivial Arf invariant.
    Then we get
    \[((\varphi_r \perp s[1, \Delta(\varphi_r)])_{LE})_\an \cong s\qpf{xs, a}.\]
    In particular, we can apply \ref{thm_descentQF4} to $\widehat \varphi:= \varphi_r \perp s[1, \Delta(\varphi_r)]$ and deduce the existence of $\alpha, \beta \in F$ such that
    \[(\widehat \varphi_E)_\an \cong s\qpf{\alpha, \beta}.\]
    Adding $\qf s$ on both sides and using the isometries $(\ql\varphi_E)_\an \cong \qf s_E$ and $s[1, t] \perp \qf s \cong \H \perp \qf s$, we obtain the Witt equivalence
    \begin{align*}
        \varphi_E&\sim (\varphi_r \perp \ql\varphi)_E \\
        &\sim (\varphi_r \perp s[1, \Delta(\varphi_r)]  \perp \qf s \perp \qf{0, \ldots, 0})_E\\
        &\sim (\widehat \varphi_E)_\an \perp \qf s_E \perp \qf{0, \ldots, 0} \\
        &\cong s\qpf{\alpha, \beta} \perp \qf s_E \perp \qf{0, \ldots, 0} \sim \qf s_E \perp s\alpha[1, \beta]_E \perp \qf{0, \ldots, 0}
    \end{align*}
    which yields the assertion by taking the anisotropic part.
    
    \ref{thm_descentQFleq2}: 
    Similarly as in \ref{thm_descentQF3}, it is enough to consider the case in which $(\varphi_E)_\an$ is of type $(1, 0)$.
    Let $a = \Delta(\varphi)$.
    By assumption, there is $x \in L^\ast$ such that $(\varphi_{LE})_\an = x[1, a]$.
    In $\W_q(L)$ we thus have
    \[\varphi_{L} = x[1, a] + \psi_1 + \psi_2,\]
    where, depending on the separability degree of $E / F$, the forms $\psi_i$ are divisible either by $\bipf{a_i}$ or $\qpf{a_i}$ for a suitable $a_i \in L^\ast$, see \cite[Theorems 2.1, 2.2]{Ahmad04}.
    
    Now, we can conclude as in \ref{thm_descentQF4}. 
\end{proof}

\section*{Acknowledgments}

The main part of the research was conducted during two stays of the second author at Universit\'e d'Artois in November 2024 and October 2025.
He expresses his gratitude for the hospitality during his visits.

Furthermore, the authors would like to thank Nikita Karpenko for his comments that the paper of Peyre \cite[Theorem 4.1]{PeyreProductsSeveriBrauer} remains true in arbitrary characteristic.


\bibliographystyle{alpha}
\bibliography{bibliography}

\end{document}